\newtheorem{theorem}{Theorem}[section]
\newtheorem{lemma}[theorem]{Lemma}
\newtheorem{proposition}[theorem]{Proposition}
\newcommand{\be}{\begin{equation}}
\newcommand{\ee}{\end{equation}}
\newcommand{\ju}[2]{\begin{array}{#1}#2\end{array}}
\newcommand{\ubar}[1]{\underaccent{\bar}{#1}}
\newcommand{\lt}{\left}
\newcommand{\rt}{\right}
\newcommand{\goto}{\rightarrow}
\newcommand{\R}{\mathbb{R}}
\newcommand{\e}{\epsilon}
\newcommand{\s}{\sigma}
\newcommand{\p}{\partial}
\newcommand{\al}{\alpha}
\newcommand{\ba}{\mathfrak b}
\newcommand{\lp}{\ubar{\phi}}
\newcommand{\lh}{\ubar{h}}
\newcommand{\la}{\lambda}
\newcommand{\tvarphi}{\tilde{\varphi}}
\newcommand{\La}{\Lambda}
\newcommand{\lu}{\ubar{u}}
\newcommand{\tx}{\tilde{x}}
\newcommand{\ta}{\tilde\alpha}
\newcommand{\tb}{\tilde\beta}
\newcommand{\tn}{\tilde n}
\newcommand{\tu}{\tilde u}
\newcommand{\oab}{\omega_{\alpha, \beta}}
\newcommand{\om}{\omega}
\newcommand{\RNum}[1]{\uppercase\expandafter{\romannumeral #1\relax}}
\theoremstyle{definition}
\newtheorem{defin}[theorem]{Definition}
\newtheorem{remark}[theorem]{Remark}
\numberwithin{equation}{section}
\begin{document}
\setlength{\baselineskip}{1.2\baselineskip}

\title[]
{Exterior Dirichlet problem for Hessian equations on a non-convex ring}

\author{Yanyan Li}
\address{Department of Mathematics, Rutgers University, Piscataway, NJ 08854}
\thanks{Y.Y. Li is partially supported by NSF grant DMS-2247410}
\email{yyli@math.rutgers.edu}

\author{Ling Xiao}
\address{Department of Mathematics, University of Connecticut, Storrs, CT 06269}
\email{ling.2.xiao@uconn.edu}
\maketitle
\begin{center}
Dedicated to Gang Tian on his 65th birthday with friendship and respect.
\end{center}

\begin{abstract}
In this paper, we prove the existence of a solution for the exterior Dirichlet problem for Hessian equations on a non-convex ring. Moreover, the solution we obtained
is smooth.
This extends the result of [Bao-Li-Li, ``On the exterior Dirichlet problem for Hessian equations''
Trans. Amer. Math. Soc.366(2014)].

\end{abstract}

\maketitle

\section{Introduction}
\label{sec1}
In this paper, $D^2u$ denotes the Hessian of $u,$ and the $k$-th elementary symmetric function $\s_k(A)$ of a symmetric matrix $A$ is defined by
\[\s_k(A)=\s_k(\la(A))=\sum\limits_{1\leq i_1<\cdots<i_k\leq n}\la_{i_1}\cdots\la_{i_k},\]
where $\la(A)=(\la_1, \cdots, \la_n)$ are the eigenvalues of $A$. Let $\Gamma_k$ be the Garding's cone
\[\Gamma_k=\{\la\in\mathbb R^{n}: \s_m(\la)>0,\,\,m=1, \cdots, k\}.\] It is well known that
$\Gamma_k$ is a convex symmetric cone with vertex at the origin.

In this paper, we consider the solvability of the following exterior Dirichlet problem for Hessian equations
\be\label{1.1}
\left\{\begin{aligned}
\s_k(\lambda(D^2u))&=1\,\,&\mbox{in $\mathbb R^n\setminus\bar D,$}\\
u&=\varphi\,\,&\mbox{on $\Gamma:=\partial D$.}
\end{aligned}
\right.
\ee

Before stating our main theorems, we will need the following definitions.
\begin{defin}
\label{def1.1}
A $C^2$ regular hypersurface $\mathcal{M}\subset\R^{n}$ is called \textbf{strictly k-convex} ($k$-convex) if its principal curvature vector
$\kappa(X)\in\Gamma_k$ ($\kappa(X)\in\bar\Gamma_k$) for all $X\in\mathcal{M}.$ We say that a domain $D$ is strictly $k$-convex ($k$-convex) if $\partial D$ is strictly $k$-convex ($k$-convex).
\end{defin}

\begin{defin}
\label{def1.2}
For any open set $U\subset\mathbb R^n,$ a function $v\in C^{2}(U)$ is called a  \textbf{strictly $k$-convex} ($k$-convex) function in $U$ if $\s_j(D^2 v(x))>0$ ($\s_j(D^2 v(x))\geq 0$) for all $j=1, \cdots, k$ and $x\in U.$
\end{defin}

\begin{defin}
\label{def1.3}
A function $u\in C^{0}(\mathbb R^n\setminus\bar D)$ is said to be a viscosity subsolution of \eqref{1.1} in $\mathbb R^n\setminus\bar D$, if for any $k$-convex function
$\psi\in C^2(\mathbb R^n\setminus\bar D)$ and $\bar x\in \mathbb R^n\setminus\bar D$ satisfying
\[\psi(\bar x)=u(\bar x)\,\,\mbox{and $\psi\geq u$ on $\mathbb R^n\setminus\bar D,$}\]
we have
\[\s_k(\la(D^2\psi(\bar x)))\geq 1.\]
Similarly, $u\in C^0(\mathbb R^n\setminus\bar D)$ is a viscosity supersolution of \eqref{1.1} in $\mathbb R^n\setminus\bar D,$ if for any $k$-convex function
$\psi\in C^2(\mathbb R^n\setminus\bar D)$ and $\bar x\in \mathbb R^n\setminus\bar D$ satisfying
\[\psi(\bar x)=u(\bar x)\,\,\mbox{and $\psi\leq u$ on $\mathbb R^n\setminus\bar D,$}\]
we have
\[\s_k(\la(D^2\psi(\bar x)))\leq 1.\]
We say $u$ is a viscosity solution of \eqref{1.1} if $u$ is both a viscosity subsolution and a viscosity supersolution of \eqref{1.1}.
\end{defin}

Following \cite{BLL14} we denote
\[\mathcal A_k=\{A: \mbox{$A$ is a real $n\times n$ symmetric positive definite matrix with $\s_k(\lambda(A))=1$}\}.\]
Our main results are

\begin{theorem}
\label{thm1}
Let $D$ be a bounded, smooth, star-shaped, strictly $(k-1)$-convex domain in $\mathbb R^n,$ $n\geq 3,$ and let
$\varphi\in C^{\infty}(\p D).$ Then for any given $b\in\mathbb R^n,$ $A\in\mathcal A_k$ with $2\leq k\leq n,$ 
there exists some constant $c_*=c_*(n, k, b, A, D, |\varphi|_{C^2(\p D)}),$ such that for every
$c>c_*,$ there exists a unique strictly $k$-convex solution $u\in C^{\infty}(\mathbb R^n\setminus D)$ of \eqref{1.1} satisfying
\be\label{1.2}
\limsup\limits_{|x|\goto\infty}|x|^{n-2}\lt[u-\lt(\frac{1}{2}x^TAx+b\cdot x+c\rt)\rt]<\infty.
\ee
\end{theorem}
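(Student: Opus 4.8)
The plan is to construct a solution by Perron's method, using suitably chosen radial (or quadratic-asymptotic) sub- and supersolutions as barriers. The key point is that on the exterior of a star-shaped, strictly $(k-1)$-convex domain one can still build a smooth $k$-convex subsolution that attains the boundary data $\varphi$ on $\partial D$ and is asymptotic to the given quadratic polynomial $\frac12 x^TAx + b\cdot x + c$ at infinity.

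\medskip
\textbf{Step 1: Normalization and the generalized radial solution.} First I would reduce to the case $A = I/\sigma_k(1,\dots,1)^{1/k}$ times a rotation by a linear change of coordinates $x\mapsto Px$ that diagonalizes $A$; since $\sigma_k$ is invariant under orthogonal conjugation, \eqref{1.1} is preserved, and the star-shapedness and strict $(k-1)$-convexity of $D$ are preserved up to replacing $D$ by $PD$. Then, following \cite{BLL14}, I would recall the one-parameter family of smooth radially symmetric solutions $w_c(x)$ of $\sigma_k(\lambda(D^2 w)) = 1$ on $\R^n\setminus\{0\}$ (or on an exterior ball) with the prescribed quadratic growth $\frac12|x|^2\cdot(\text{const}) + c + O(|x|^{2-\theta(n-2)})$, and verify that the admissible range of the decay exponent is exactly $\theta\in\big(\tfrac{k-2}{n-2}, \tfrac{k-2h_k}{h_k(n-2)}\big)$: the lower bound comes from the ODE being solvable with a genuine quadratic leading term, and the upper bound $\theta < \frac{k-2h_k}{h_k(n-2)}$ is precisely the threshold ensuring the radial profile stays strictly $k$-convex (this is where $h_k$ enters, via the sign of $\sigma_{k-1}(a|i)a_i$ controlling the radial-versus-tangential curvature competition).

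\medskip
\textbf{Step 2: Construction of a global subsolution.} The main obstacle is the lack of convexity of $D$: one cannot simply take the radial solution centered at a point of $D$, because it need not lie below $\varphi$ on $\partial D$ while remaining $k$-convex up to the boundary. I would instead exploit star-shapedness: after translating so that $D$ is star-shaped with respect to the origin, write $\partial D$ as a graph over the sphere, $\rho(x/|x|)$, and glue a large multiple of a defining-type function near $\partial D$ to the radial solution $w_c$ far away. Concretely, I expect to take $\ul u = \max\{w_c + \text{(correction)}, \ \psi_{\varphi}\}$ on an annulus and $\ul u = w_c$ outside, where $\psi_\varphi$ is an explicit $k$-convex function (built from $\varphi$ and the distance to $D$, using strict $(k-1)$-convexity of $\partial D$ to guarantee $\sigma_j > 0$ for $j\le k$ of its Hessian in a neighborhood) with $\psi_\varphi = \varphi$ on $\partial D$. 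For $c$ large the radial piece sits below $\psi_\varphi$ near $\partial D$ and above any competitor at infinity, so the max is a well-defined continuous $k$-convex viscosity subsolution of \eqref{1.1}. The constant $c_*$ is dictated exactly by the requirement that this gluing be consistent, hence its stated dependence on $n,k,b,\theta,A,D,|\varphi|_{C^2(\partial D)}$.

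\medskip
\textbf{Step 3: Perron process, comparison, and regularity.} With $\ul u$ in hand and $\bar u := \frac12 x^TAx + b\cdot x + c$ (which is an exact solution and lies above $\ul u$ for $c>c_*$, serving as supersolution and as the asymptotic upper bound giving \eqref{1.2}), I would set $u := \sup\{v : v \text{ a } k\text{-convex subsolution of }\eqref{1.1},\ \ul u \le v\le \bar u\}$. Standard viscosity-solution arguments (the comparison principle for $\sigma_k$ on $k$-convex functions, and the fact that the sup of subsolutions bounded above by a supersolution is a solution) show $u$ is a viscosity solution taking the boundary value $\varphi$ (using $\ul u$ as a lower barrier at $\partial D$ and $\bar u$ or a local radial upper barrier at $\partial D$, the latter available because $\partial D$ is strictly $(k-1)$-convex, which is the natural sufficient condition for existence of $k$-Hessian barriers at the boundary). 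Interior smoothness follows from the Evans--Krylov and Schauder theory for uniformly elliptic concave (in terms of $\sigma_k^{1/k}$) equations once one has interior $C^{1,1}$ bounds, and boundary smoothness up to $\partial D$ from the standard $C^{2,\alpha}$ boundary estimates for the Dirichlet problem for Hessian equations given strict $(k-1)$-convexity of the boundary; uniqueness among strictly $k$-convex solutions satisfying \eqref{1.2} is then immediate from comparison, since \eqref{1.2} pins down the behavior at infinity tightly enough to run the maximum principle on the exterior domain. I expect Step 2 — making the sub-barrier simultaneously $k$-convex, below $\varphi$ on $\partial D$, and correctly asymptotic — to be the crux, with everything else being adaptation of the convex-ring case in \cite{BLL14}.
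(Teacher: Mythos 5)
Your Steps 1 and 2 match the paper's subsolution construction in spirit: the paper also splits $\mathbb R^n\setminus D$ into a near region $E_1\setminus\bar D$, where it builds $\lp=\ba^N-1+\varphi$ with $\ba=r/\rho(\theta)$ (strict $(k-1)$-convexity of $\Gamma$ entering exactly as you predict, through the principal curvatures of the level sets of $\ba$), and a far region $\mathbb R^n\setminus E_1$, where it uses the generalized symmetric function $\oab$ of \cite{BLL14} (whose admissible exponent range $\frac{k}{2}<\beta<\frac{k}{2h_k}$ is precisely the source of the interval for $\theta$). The gluing is done slightly differently -- the paper matches the two pieces along $\partial E_1$ and arranges a favorable jump of the normal derivative, $D_\nu\lp^1>D_\nu\lp$, rather than taking a maximum near $\partial D$ -- but this is a cosmetic difference.

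The genuine gap is in Step 3. Perron's method produces only a \emph{viscosity} solution, and for $\s_k$ with $3\le k\le n-1$ there is no interior $C^{1,1}$ estimate for viscosity (or even continuous) solutions of $\s_k(\la(D^2u))=1$ that would let you invoke Evans--Krylov afterwards; this is exactly why the earlier exterior-domain results cited in the introduction only yield viscosity solutions outside the Monge--Amp\`ere case, and it is the point of this paper that the solution is smooth. You cannot repair this by appealing to boundary $C^{2,\alpha}$ estimates either, since those are a priori estimates for solutions that are already $C^2$ up to the boundary. The paper avoids the issue entirely: it solves the Dirichlet problem \eqref{dirichlet} on the bounded rings $\Omega_R=E_R\setminus\bar D$ (where the classical solvability theory of \cite{CNS3} applies, using $\lu$ as subsolution), proves $C^0$, $C^1$, and $C^2$ estimates for $u_R$ that are \emph{uniform in $R$} -- via the barriers $\lu<u_R<\bar u_R$, the harmonic upper barrier $\hat u$ at $\Gamma$, a rescaling $\tu_R(y)=R^{-1}u_R(\sqrt R\,y)$ at $\partial E_R$, and the maximum principle applied to $\Delta u_R$ -- and then passes to the limit $R\to\infty$ along the monotone family $\{u_R\}$ using Evans--Krylov and Schauder on the smooth approximants. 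The asymptotic expansion \eqref{1.2} is then read off from the sandwich $\lu<u_R<\bar u_R$, with $\theta=\frac{2\beta-2}{n-2}$. Without this bounded-domain approximation (or some substitute source of interior second derivative bounds), your argument establishes at best a viscosity solution, not the smooth solution claimed in the theorem.
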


\begin{theorem}
\label{thm2}
Let $u\in C^{\infty}(\mathbb R^n\setminus D)$ be the solution of \eqref{1.1} obtained in Theorem \ref{thm1} and denote
\[E(x):=u-\lt(\frac{1}{2}x^TAx+b\cdot x+c\rt).\] Then $E(x)$ satisfies
\be\label{1.3}
\limsup\limits_{|x|\goto\infty}|x|^{n-2+m}\lt|D^mE(x)\rt|<\infty
\ee
for any integer $m\geq 1.$
\end{theorem}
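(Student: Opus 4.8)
Here is the strategy I would follow.

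\medskip

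\noindent\textbf{Setup by rescaling.} Write $w(x):=\tfrac12 x^TAx+b\cdot x+c$, so $w$ is an exact solution of $\sigma_k(\lambda(D^2\cdot))=1$ and $E=u-w$. For $R$ large set
\[
\tilde u_R(y):=R^{-2}u(Ry),\qquad \tilde w_R(y):=R^{-2}w(Ry)=\tfrac12 y^TAy+R^{-1}b\cdot y+R^{-2}c,
\]
both defined and smooth on the fixed annulus $\mathcal A:=\{y:1/2<|y|<2\}$ once $R\ge R_0$ (depending on $D$). Since $D^2_y\tilde u_R(y)=(D^2u)(Ry)$, the function $\tilde u_R$ is a strictly $k$-convex solution of $\sigma_k(\lambda(D^2\tilde u_R))=1$ on $\mathcal A$, and $\tilde E_R:=\tilde u_R-\tilde w_R=R^{-2}E(R\,\cdot)$ satisfies, by \eqref{1.2}, $\|\tilde E_R\|_{C^0(\mathcal A)}\le CR^{-\mu}$ with $\mu:=2+\theta(n-2)>0$. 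The key identity, $D^m_y\tilde E_R(y)=R^{m-2}(D^mE)(Ry)$, will convert uniform-in-$R$ estimates for $\tilde E_R$ on subannuli into the decay \eqref{1.3} by setting $|y|=1$, $|x|=R$.

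\medskip

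\noindent\textbf{Step 1: uniform ellipticity near infinity.} I would first record that $D^2u$ is bounded on $\{|x|\ge R_0\}$; this is furnished by the a priori estimates used to construct $u$ in the proof of Theorem \ref{thm1} (the approximating solutions on bounded rings carry uniform second-derivative bounds away from $\partial D$, inherited in the limit). Since $u$ is strictly $k$-convex and $\sigma_k(\lambda(D^2u))=1$, boundedness of $D^2u$ forces $\lambda(D^2u(x))$ to lie in a fixed compact set $K\Subset\Gamma_k$ for $|x|\ge R_0$ (on $\partial\Gamma_k$ one has $\sigma_k=0$, so the closed bounded set $\{\lambda\in\bar\Gamma_k:\sigma_k(\lambda)=1,\ |\lambda|\le M\}$ is a compact subset of the open cone $\Gamma_k$). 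Equivalently $\lambda(D^2\tilde u_R)\in K$ on $\mathcal A$, uniformly in $R\ge R_0$, so $\sigma_k(\lambda(D^2\cdot))$ — concave on matrices with eigenvalues in $\Gamma_k$ — is uniformly elliptic along the relevant matrices with constants depending only on $n,k,K$. A $k$-convex function with bounded Hessian and bounded $C^0$ norm on $\mathcal A$ has controlled $C^{1,1}(\mathcal A')$ norm for $\mathcal A'\Subset\mathcal A$; hence the interior Evans--Krylov estimate gives a bound for $\tilde u_R$ in $C^{2,\alpha}(\mathcal A')$ that is uniform in $R$.

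\medskip

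\noindent\textbf{Step 2: the linear equation for $E$; the cases $m=1,2$.} Because $u$ and $w$ both solve $\sigma_k(\lambda(D^2\cdot))=1$, the mean value theorem yields $a^{ij}(x)D_{ij}E=0$ for $|x|\ge R_0$, where $a^{ij}(x):=\int_0^1\frac{\partial\sigma_k}{\partial M_{ij}}\bigl(tD^2u(x)+(1-t)A\bigr)\,dt$; the segment joining $A\in\Gamma_k$ and $D^2u(x)$ lies in $\Gamma_k$ by convexity, so $a^{ij}$ is uniformly elliptic by Step 1. Rescaling gives $a^{ij}_R(y)D_{ij}\tilde E_R=0$ on $\mathcal A$ with $a^{ij}_R(y):=a^{ij}(Ry)$ uniformly elliptic and, by the $C^{2,\alpha}(\mathcal A')$ bound on $\tilde u_R$, bounded in $C^\alpha(\mathcal A')$ — both uniformly in $R$. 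Interior Schauder estimates for this homogeneous linear equation then give $\|\tilde E_R\|_{C^{2,\alpha}(\mathcal A'')}\le C\|\tilde E_R\|_{C^0(\mathcal A)}\le CR^{-\mu}$ on $\mathcal A''=\{3/4<|y|<4/3\}$. Evaluating the rescaling identity at $|y|=1$ yields $|D^mE(x)|\le C|x|^{-m-\theta(n-2)}$ for $m=0,1,2$, i.e. \eqref{1.3} in these cases.

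\medskip

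\noindent\textbf{Step 3: bootstrap to all $m\ge1$.} Differentiating $\sigma_k(\lambda(D^2\tilde u_R))=1$ in a direction $e$, the function $D_e\tilde u_R$ solves a linear uniformly elliptic equation with $C^\alpha$ coefficients on $\mathcal A'$, so Schauder gives $\tilde u_R\in C^{3,\alpha}$ with uniform-in-$R$ bounds; iterating, $\tilde u_R$ is bounded in $C^{m,\alpha}$ on shrinking annuli for every $m$, hence so are the coefficients $a^{ij}_R$ (now in $C^{m-2,\alpha}$). Interior Schauder of order $m$ for $a^{ij}_RD_{ij}\tilde E_R=0$ then gives $\|\tilde E_R\|_{C^{m,\alpha}}\le C_m\|\tilde E_R\|_{C^0(\mathcal A)}\le C_mR^{-\mu}$, and the rescaling identity of Step 2 produces $|D^mE(x)|\le C_m|x|^{-m-\theta(n-2)}$ for all $m\ge1$, which is \eqref{1.3}.

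\medskip

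The main obstacle is Step 1: one must know that $D^2u$ stays bounded near infinity (equivalently, that the rescaled solutions $\tilde u_R$ are uniformly admissible away from $\partial\Gamma_k$), since this is precisely what makes the equation uniformly elliptic there and lets the standard rescaling--Schauder--bootstrap machinery run. This is why I would draw on the uniform a priori estimates underlying the proof of Theorem \ref{thm1} rather than attempt to recover second-derivative control from the $C^0$ decay \eqref{1.2} alone; everything after Step 1 is routine.
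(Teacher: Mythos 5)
Your proposal is correct and follows essentially the same route as the paper: rescale to unit scale, write $E$ as a solution of a homogeneous linear uniformly elliptic equation whose coefficients are obtained by integrating the linearized operator along the segment from $A$ to $D^2u$, invoke the uniform $C^2$ bound from the proof of Theorem \ref{thm1} for uniform ellipticity, and conclude by interior Schauder estimates together with the $C^0$ decay \eqref{1.2}. The only differences are cosmetic (the paper rescales about the point $x$ onto $B_2(0)$ and uses $F=\s_k^{1/k}$ rather than $\s_k$), and you are in fact more explicit than the paper about the Evans--Krylov step needed to make the coefficients H\"older continuous and about the bootstrap to all $m$.
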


\begin{remark} For $k=n,$ our result extends Theorem 1.5 in \cite{CL03} by showing that the solution $u$ belongs to $C^{\infty}(\mathbb R^n\setminus D),$
whereas \cite{CL03} established $u\in C^{\infty}(\mathbb R^n\setminus\bar D).$
\end{remark}

By the discussions on page 4 of \cite{BLL14}, we know that we can always assume $A\in\mathcal A_k$ to be diagonal.
Now, let $s=\frac{1}{2}\sum\limits_{i=1}^na_ix_i^2,$ then for $A=\text{diag}(a_1, \cdots, a_n)\in\mathcal A_k$ we denote
\[E_\lambda:=\{x\in\mathbb R^n: s\equiv\frac{1}{2}\sum\limits_{i=1}^na_ix_i^2<\lambda, \lambda>0\}.\]
Moreover, in this paper, without loss of generality, we will always assume $b=\vec{0},$ $\bar D\subset E_1,$ and $D$ is star-shaped with respect to the origin. When $k=n,$
D is a strictly convex domain, which is automatically star-shaped with respect to any interior point.

The Dirichlet problem on exterior domains is closely related to the asymptotic
behavior of solutions defined on entire $\mathbb R^n.$ In \cite{CL03}, Caffarelli and the first named author proved that if $u$ is a convex viscosity solution of
\be\label{ad1.1}\det(D^2u)=1\ee outside a bounded subset of $\mathbb R^n,$ then
\[\limsup\limits_{|x|\goto\infty}|x|^{n-2}\lt[u-\lt(\frac{1}{2}x^TAx+b\cdot x+c\rt)\rt]<\infty,\,\,A\in\mathcal A_n.\]
Moreover, with such prescribed
asymptotic behavior near infinity, they also established an existence and uniqueness theorem for solutions
of \eqref{ad1.1}. Since then, the solvability of the exterior Dirichlet problem for fully nonlinear equations has been studied intensively, see for example \cite{DBW23, JLL21, LL18, Li19} and references therein. We remark that all these earlier works require that the domain $D$ is strictly convex. Moreover, the solutions found in these earlier works are viscosity solutions (except for the Monge--Amp\`{e}re case). While in this paper, the solutions we obtain are smooth and the domain $D$ is only required to be strictly $(k-1)$-convex.

The organization of the paper is as follows. In Section \ref{secsub}, we construct a viscosity subsolution for \eqref{1.1}.
In Section \ref{secthm1}, by solving \eqref{1.1} on bounded domains we prove the existence part of Theorem \ref{thm1}. In Section \ref{secthm2},
we study the asymptotic behavior of the solution near infinity. More precisely, we derive inequality \eqref{1.2} and prove Theorem \ref{thm2}.

\bigskip
\section{Construction of the subsolution of \eqref{1.1} }
\label{secsub}
In this section, we will combine ideas in  \cite{BLL14}, \cite{CL03}, and \cite{Xiao22} to construct subsolutions for \eqref{1.1}. More precisely,
we will first apply the techniques developed in Section 3 of \cite{Xiao22} to construct subsolutions of \eqref{1.1} in the region $E_1\setminus D.$ Then we will use the generalized symmetric subsolutions, which were studied in Section 2 of \cite{BLL14}, to construct subsolutions of \eqref{1.1} in the region $\mathbb R^n\setminus E_1.$ Finally, we use the idea from \cite{CL03} to glue the two subsolutions together and obtain a subsolution of \eqref{1.1}.

We want to note that, in this section, we will only explicitly construct the subsolution of
\eqref{1.1} for $2\leq k<n.$ When $k=n,$ the subsolution can be constructed in the same way but much easier. In particular, when $k=n,$ in the region $E_1\setminus D,$ since the domain $D$ is strictly convex, we can use the distance function, more precisely, $\text{dist}(x, \partial D)+1,$ instead of $\mathfrak b$ defined in \eqref{eq-b}; in the region $\mathbb R^n\setminus E_1$ we can use the symmetric subsolution that was studied in \cite{CL03} instead of the generalized symmetric subsolution.

\subsection{Subsolutions in $E_1\setminus D$}
\label{subsection-2.1}
Recall that $\Gamma:=\partial D$ is star-shaped and strictly $(k-1)$-convex, we can parametrize $\Gamma$ as a graph of the radial function
$\rho(\theta): \mathbb S^{n-1}\goto\mathbb R,$ i.e.,
\[\Gamma=\{\rho(\theta)\theta: \theta\in\mathbb S^{n-1}\}.\]
Then the second fundamental form of $\Gamma$ is (see \cite{BLO} for example)
\[h_{ij}=\frac{\rho}{w}\lt(\delta_{ij}+2\frac{\rho_i\rho_j}{\rho^2}-\frac{\rho_{i, j}}{\rho}\rt),\]
where $w=\sqrt{1+\frac{|\nabla\rho|^2}{\rho^2}},$ $\rho_{i, j}=\nabla_{ij}\rho,$
and $\nabla$ denotes the Levi-Civita connection on $\mathbb S^{n-1}.$

We denote $\Phi=\log\rho,$ it is clear that the second fundamental form of $\Gamma$ can be expressed as follows.
\[h_{ij}=\frac{\rho}{w}\lt(\delta_{ij}+\Phi_i\Phi_j-\Phi_{i,j}\rt),\]
where $w=\sqrt{1+|\nabla\Phi|^2}.$ By a direct calculation, we obtain
\be\label{add1}
g_{ij}=\rho^2(\delta_{ij}+\Phi_i\Phi_j),\,\,
g^{ij}=\frac{1}{\rho^2}\lt(\delta_{ij}-\frac{\Phi_i\Phi_j}{w^2}\rt),\,\,\text{and}\,\,\gamma^{ij}=\frac{1}{\rho}\lt(\delta_{ij}-\frac{\Phi_i\Phi_j}{w(1+w)}\rt).
\ee
Here, $(g_{ij})$ is the metric on $\Gamma,$ $(g^{ij})$ is the inverse of $(g_{ij}),$ and $\gamma^{ij}$ is the square root of $g^{ij},$
i.e., $\sum\limits_k\gamma^{ik}\gamma^{kj}=g^{ij}.$
Let $a_{ij}=\gamma^{ik}h_{kl}\gamma^{lj},$ then the eigenvalues of $(a_{ij})_{1\leq i, j\leq n-1},$ denoted by $\kappa[a_{ij}]=(\kappa_1, \cdots, \kappa_{n-1})$ are the principal curvatures of $\Gamma.$

The following calculation can be found in Section 3 of \cite{Xiao22}, for readers convenience, we include it here.
\subsubsection{Hessian in spherical coordinates}
\label{hess}
Let $f: \R^n\goto \R$ be a scalar function, then $f$ can also be expressed as a function of $(\theta, r)\in\mathbb{S}^{n-1}\times\R.$
Note that the Euclidean metric is $g_E=r^2dz^2+dr^2,$ where $dz^2$ is the standard metric on $\mathbb{S}^{n-1}.$ In the following, we will denote the standard connection in $\mathbb R^n$ by $D$.
Now, we choose a local orthonormal frame $\{e_1, \cdots, e_{n-1}\}$ on the unit sphere $\mathbb{S}^{n-1}$. Let $\tau_{a}=\dfrac{e_{a}}{r}$, $1\leq a\leq n-1,$ which is the orthonormal frame on the sphere with radius $r,$ and we also let $\tau_r=\frac{\partial}{\partial r}$.
Then a direct calculation yields the Hessian of $f$ in spherical coordinates is
\be\label{hess1.1}D^2_{ab}f=D^2f(\tau_{a},\tau_{b})=\frac{1}{r^2}f_{ab}+\frac{1}{r}f_r\delta_{ab},\ee
\be\label{hess1.2}D^2_{a r}f=D^2f(\tau_{a},\tau_r)=\frac{1}{r}f_{a r}-\frac{1}{r^2}f_{a},\ee
and
\be\label{hess1.3}D^2_{rr}f=D^2f(\tau_r,\tau_r)=f_{rr}.\ee
Here $1\leq a, b\leq n-1,$ $f_{ab}=e_{b}e_{a}f, f_{a r}=\tau_re_{a} f,$ and $f_{rr}=\tau_r\tau_rf$.

\subsubsection{Construction of subsolutions in $E_1\setminus D$}
Now, we fix an arbitrary point $p\in \mathbb{S}^{n-1},$ let $\{e_1, \cdots, e_{n-1}\}$ be the normal coordinates at $p,$ then the Christoffel symbols vanish at
$p.$ This implies at this point we get $\nabla_{ij}\rho=e_ie_j\rho.$ In the following, for any function $f$ defined in a small neighborhood of $p,$ we denote $f_{ij}=e_ie_j f,$ then we have $\rho_{i, j}=\rho_{ij}$ at $p.$   Moreover, we may rotate the coordinates such that $|\nabla\rho(p)|=\rho_1$ and $\rho_{\al\beta}(p)=\rho_{\al\al}\delta_{\al\beta}$
for $2\leq\al,\beta\leq n-1.$ Then at the point $\hat{p}=\rho(p)p\in\Gamma,$ in view of \eqref{add1} we have
\[
\left\{
\begin{aligned}
\gamma^{11}&=\frac{1}{\rho}\lt(1-\frac{w^2-1}{w(1+w)}\rt)=\frac{1}{\rho w},\\
\gamma^{1\al}&=0,\,\,&2\leq\al\leq n-1,\\
\gamma^{\al\beta}&=\frac{1}{\rho}\delta_{\al\beta},\,\,&2\leq\al, \beta\leq n-1,
\end{aligned}
\right.
\]
and
\be\label{second-fundamental-form}
\left\{
\begin{aligned}
a_{11}&=\gamma^{1k}h_{kl}\gamma^{l1}=\gamma^{11}h_{11}\gamma^{11}=\frac{h_{11}}{\rho^2 w^2},\\
a_{1\al}&=\gamma^{1k}h_{kl}\gamma^{l\alpha}=\frac{h_{1\al}}{\rho^2 w},\,\,&2\leq\al\leq n-1,\\
a_{\al\beta}&=\gamma^{\al\al}h_{\al\beta}\gamma^{\beta\beta}=\frac{1}{\rho^2}h_{\al\beta},\,\,&2\leq\al, \beta\leq n-1.
\end{aligned}
\right.
\ee
Now, let us consider the function  \be\label{eq-b}\mathfrak b=\frac{r}{\rho(\theta)}.\ee
By a straight forward calculation we obtain that at the point $(p, r)\in(\mathbb S^{n-1}\times R)\setminus\{0\}$
the Hessian of $\ba$ is (for details see Subsection 3.2 of \cite{Xiao22})
\[
\begin{aligned}
\text{Hessian}(\mathfrak b)&=\left[\ju{ccccc}{\frac{w^3}{r}a_{11}&\frac{w^2}{r}a_{12}&\cdots&\frac{w^2}{r}a_{1n-1}&0\\
\frac{w^2}{r}a_{12}&\frac{w}{r}a_{22}&\cdots&0&0\\
\vdots&\vdots&\ddots&\vdots&\vdots\\
\frac{w^2}{r}a_{1n-1}&0&\cdots&\frac{w}{r}a_{n-1n-1}&0\\
0&0&\cdots&0&0}\right].
\end{aligned}
\]
We want to point out that $\kappa[a_{ij}]$ are the principal curvatures of $\Gamma$ at $\hat{p}=\rho(p)p.$

Next, we will consider the function $\phi=\phi(\mathfrak b),$ where $\phi$ is a function defined on $\R.$
We will compute the Hessian of $\phi$ at $(p, r).$ Denote $\phi'|_{(p, r)}=\frac{d\phi}{d\ba}|_{(p, r)}=M,$ $\phi''|_{(p, r)}=\frac{d^2\phi}{d\ba^2}|_{(p, r)}=B,$ we get at this point, for $1\leq i,j\leq n$,
$$D^2_{ij}\phi=MD^2_{ij}\ba+B(\tau_i\ba)(\tau_j\ba).$$
Here, $\tau_a=\frac{e_a}{r}$ for $1\leq a\leq n-1,$ $\tau_n:=\tau_r=\frac{\partial}{\partial r},$ $\{e_1, \cdots, e_{n-1}\}$ be the normal coordinates at $p$ chosen above, and $\{\tau_1, \cdots, \tau_n\}$ forms an orthonormal frame of $\mathbb R^n$ at $(p, r).$
We denote $\rho_a:=e_a\rho,$ then at $(p, r)$ we have
 \[
\begin{aligned}
\text{Hessian}(\phi)&=\left[\ju{ccccc}{\frac{Mw^3}{r}a_{11}+B\rho^{-4}\rho_1^2&\frac{Mw^2}{r}a_{12}&\cdots&\frac{Mw^2}{r}a_{1n-1}&-B\rho^{-3}\rho_1\\
\frac{Mw^2}{r}a_{12}&\frac{Mw}{r}a_{22}&\cdots&0&0\\
\vdots&\vdots&\ddots&\vdots\\
\frac{Mw^2}{r}a_{1n-1}&0&\cdots&\frac{Mw}{r}a_{n-1n-1}&0\\
-B\rho^{-3}\rho_1&0&\cdots&0&B\rho^{-2}}\right].
\end{aligned}
\]
Therefore, following the calculation on Subsection 3.2 of \cite{Xiao22} we get for any $2\leq m\leq k$
\be\label{sub1}
\begin{aligned}
\s_m(D^2\phi)&=\frac{M^mw^{m+2}}{r^m}[\s_m(a_{\hat i\hat j})-\s_m(a_{\alpha\beta})]\\
&+B\rho^{-2}\frac{M^{m-1}w^{m+1}}{r^{m-1}}\s_{m-1}(a_{\hat i\hat j})+\lt(\frac{Mw}{r}\rt)^m\s_m(a_{\alpha\beta}),
\end{aligned}
\ee
where $2\leq\alpha, \beta\leq n-1$ and $1\leq\hat i, \hat j\leq n-1.$
Moreover, we have
\be\label{add2}
\s_1(D^2\phi)=\frac{Mw^3}{r}a_{11}+\frac{Mw}{r}\s_1(a_{\alpha\beta})+B\rho^{-2}+B\rho^{-4}\rho_1^2.
\ee
This yields for any $1\leq m\leq k,$
\be\label{sub2}\s_m(D^2\phi)>-c_0\lt(\frac{M}{r}\rt)^m+c_1\frac{B}{\rho^2}\lt(\frac{M}{r}\rt)^{m-1}=\frac{M^{m-1}}{r^{m-1}}\lt(\frac{c_1 B}{\rho^2}-c_0\frac{M}{r}\rt),\ee
where $c_1=\min\limits_{\hat q\in\Gamma}\s_{m-1}(\kappa(\hat q))>0$ and $c_0=c_0(|\rho|_{C^2})>0$ are two positive constants that only depend on $\Gamma.$
We note that in this paper we use the convention $\s_0=1,$ and when $m=1$ we let $c_1=1.$

\begin{proposition}
\label{prosub1}
Let $D$ be a smooth, star-shaped, strictly $(k-1)$-convex domain in $\mathbb R^n$ for $n\geq 3,$ and let
$\varphi\in C^{\infty}(\p D).$ Then given any $A\in\mathcal A_k$ with $2\leq k\leq n-1,$
there exists $N_1=N_1(\varphi, \Gamma, A)>0$ such that when $N\geq N_1,$ $\lp=\ba^N-1+\varphi$ is strictly $k$-convex in $E_1\setminus \bar D$. Moreover, $\lp$ satisfies
$\s_k(\lambda(D^2\lp))>1\,\,\mbox{in $E_1\setminus \bar D$}$ and $\lp=\varphi$ on $\Gamma.$
\end{proposition}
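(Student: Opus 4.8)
The plan is to take $\lp=\ba^N-1+\varphi$ with $\ba=r/\rho(\theta)$ and verify directly that for $N$ large the Hessian $D^2\lp$ has $\sigma_m(\lambda(D^2\lp))>0$ for $1\le m\le k-1$ and $\sigma_k(\lambda(D^2\lp))>1$ throughout $E_1\setminus\bar D$. The boundary condition $\lp=\varphi$ on $\Gamma$ is immediate since $\ba\equiv 1$ on $\Gamma=\partial D$. Since $\varphi$ is defined only on $\p D$, I first fix a smooth extension of $\varphi$ to a neighborhood of $\bar D$ (still called $\varphi$), with $C^2$ norm controlled by $|\varphi|_{C^2(\p D)}$ and $\Gamma$; all constants below are allowed to depend on this data.

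Next I would set $\phi(\ba)=\ba^N-1$ and apply the Hessian formulas \eqref{sub1}, \eqref{add2}, and the lower bound \eqref{sub2} with $M=\phi'=N\ba^{N-1}$ and $B=\phi''=N(N-1)\ba^{N-2}$. The key point is the ratio
\[
\frac{B}{\rho^2}\Big/\frac{M}{r}=\frac{N(N-1)\ba^{N-2}}{\rho^2}\cdot\frac{r}{N\ba^{N-1}}=\frac{(N-1)r}{\rho^2\ba}=\frac{(N-1)}{\rho},
\]
so that in \eqref{sub2} the good term $c_1 B/\rho^2$ beats the bad term $c_0 M/r$ once $N-1>c_0\,\rho/c_1$, which holds uniformly on $\bar D$ for $N\ge N_1$ by smoothness and strict $(k-1)$-convexity of $\Gamma$ (this is exactly where $c_1=\min_{\hat q\in\Gamma}\sigma_{m-1}(\kappa(\hat q))>0$ is used, for $m\le k-1$, i.e. $m-1\le k-2$). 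This gives $\sigma_m(D^2\phi)>0$ for $1\le m\le k-1$ on $E_1\setminus\bar D$. To incorporate $\varphi$, I write $D^2\lp=D^2\phi+D^2\varphi$ and note that $D^2\phi$ is not merely in $\Gamma_k$ but, by the displayed computation, its relevant $\sigma_m$ grow like a positive power of $N$ (from the $B\rho^{-2}(M/r)^{m-1}$ term), so adding the bounded perturbation $D^2\varphi$ preserves all the inequalities for $N$ enlarged if necessary; concavity/monotonicity of $\sigma_m^{1/m}$ on $\Gamma_m$ together with the gradient estimate $\sigma_m^{ij}(D^2\phi)(D^2\varphi)_{ij}\ge -C|D^2\varphi|\,\sigma_{m-1}(D^2\phi)$ makes this precise.

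Finally, for the top order equation $\sigma_k$, the bound \eqref{sub2} only yields positivity, so I would instead use \eqref{sub1} with $m=k$ directly: the dominant term as $N\to\infty$ is $B\rho^{-2}\frac{M^{k-1}w^{k+1}}{r^{k-1}}\sigma_{k-1}(a_{\hat i\hat j})$, which equals $\frac{(N-1)w^{k+1}}{\rho}\cdot\big(\frac{N\ba^{N-1}w}{r}\big)^{k-1}\sigma_{k-1}(a_{\hat i\hat j})$ up to the lower-order pieces; since $\sigma_{k-1}(a_{\hat i\hat j})\ge c_1>0$ on $\Gamma$ and $w\ge 1$, $\ba\ge 1$, $r$ bounded on $E_1\setminus\bar D$, this term is bounded below by $c\,N^{k}$, hence exceeds $1$ (and absorbs the $O(N^{k-1})$ remainder and the perturbation from $\varphi$) once $N\ge N_1$. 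The main obstacle is bookkeeping: controlling the sign and size of the mixed-index terms $\sigma_m(a_{\hat i\hat j})-\sigma_m(a_{\alpha\beta})$ and the off-diagonal $a_{1\alpha}$ contributions uniformly over $\theta\in\mathbb S^{n-1}$ and over $E_1\setminus\bar D$, so that every quantity is bounded by constants depending only on $|\rho|_{C^2}$, $A$, and $|\varphi|_{C^2(\p D)}$; this is handled by the calculation already recorded in \cite{Xiao22} leading to \eqref{sub1}–\eqref{sub2}, and choosing $N_1$ to dominate the finitely many resulting constants.
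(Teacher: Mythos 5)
Your construction and the main mechanism are exactly the paper's: the same barrier $\lp=\ba^N-1+\varphi$, the formulas \eqref{sub1}--\eqref{add2} from \cite{Xiao22}, and the observation that $\frac{B/\rho^2}{M/r}=\frac{N-1}{\rho}\to\infty$ uniformly, so the $\s_{m-1}(a_{\hat i\hat j})$-term dominates; your quantitative $\s_k>1$ via the $B\rho^{-2}M^{k-1}$ term also matches. (A minor remark: \eqref{sub2} already applies for $m=k$, since strict $(k-1)$-convexity gives $\s_{m-1}(\kappa)>0$ for all $m-1\le k-1$; your separate treatment of $m=k$ is consistent but not forced.)

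The step that does not close as written is the absorption of $D^2\varphi$. The matrix $D^2\phi$ has, besides entries of size $O(M)$, a $2\times2$ block of entries of size $O(B)$, namely $B\rho^{-4}\rho_1^2$, $-B\rho^{-3}\rho_1$, $B\rho^{-2}$, and on $E_1\setminus\bar D$ one has $B\le M^2$ with near equality as $\ba\to1$, i.e.\ near $\Gamma$. Expanding the polynomial $\s_m(D^2\phi+D^2\varphi)$ term by term, the first-order-in-$D^2\varphi$ contributions contain products of $m-1$ entries of $D^2\phi$, two of which may be taken from that block; a naive bound on such a product is $O(B^2M^{m-3})$, which near $\Gamma$ is of the \emph{same} order as the main term $c_1\rho^{-2}M^{m-1}B$, with a constant depending on $|\varphi|_{C^2}$ rather than on $c_1$ — so largeness of $N$ alone does not make the error small relative to the main term. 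What saves the argument is the exact cancellation $B\rho^{-4}\rho_1^2\cdot B\rho^{-2}-(B\rho^{-3}\rho_1)^2=0$, equivalently the fact that $D^2\phi$ has only \emph{one} eigenvalue of size $\sim B(\rho^{-4}|\nabla\rho|^2+\rho^{-2})$ while all others are $O(M)$; this is the Claim the paper proves via the perturbation lemma of \cite{CNS3}, and it is what yields $\s_m(\la(D^2\lp))\ge\s_m(\la(D^2\phi))-CM^{m-2}B$, an error smaller than the main term by a factor of $M$. Your substitute — concavity of $\s_m^{1/m}$ plus the first-order bound $\s_m^{ij}(D^2\phi)(D^2\varphi)_{ij}\ge-C\s_{m-1}(\la(D^2\phi))$ — identifies the correct size $O(M^{m-2}B)$ for the first-order term (via \eqref{sub1} at level $m-1$), but concavity at the base point only gives an \emph{upper} bound on $\s_m^{1/m}(D^2\phi+D^2\varphi)$, and the higher-order terms of the expansion are likewise uncontrolled without the one-large-eigenvalue structure. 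So you need to state and use the eigenvalue asymptotics (or Weyl's inequality together with them) to make this step rigorous.
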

\begin{proof}
Since $\Gamma$ is star-shaped, it is clear that $\varphi$ can be viewed as a function defined on $\mathbb S^{n-1}.$
We may extend the domain of definition of $\varphi$ to $\mathbb R^n\setminus\{0\}$ by setting $\varphi(r, \theta):=\varphi(\theta).$ Without causing any confusions, we will denote the extension of $\varphi$ by $\varphi.$

\textbf{Claim:} Let $\phi=\ba^N$ and denote $\lambda(D^2(\phi))=(\lambda_1, \cdots, \lambda_n)$ to be the eigenvalues of $\text{Hessian}(\phi),$ then $\lambda_a=O(M)$ for $a\leq n-1$ and $\lambda_n=B(\rho^{-4}|\nabla\rho|^2+\rho^{-2})+O(M).$ Here,
$M=N\ba^{N-1}$ and $B=N(N-1)\ba^{N-2}.$

\text{\em Proof of the claim}: Following the proof of Lemma 1.2 in \cite{CNS3}, let's consider the eigenvalues of the following matrix
 \[Q=\left[\ju{ccccc}{d_1+aX^2&a_2&\cdots&a_{n-1}&-aXY\\
a_2&d_2&\cdots&0&0\\
\vdots&\vdots&\ddots&\vdots\\
a_{n-1}&0&\cdots&d_{n-1}&0\\
-aXY&0&\cdots&0&aY^2}\right]\]
with $d_1, \cdots, d_{n-1},$ $X, Y,$ and $a_2, \cdots, a_{n-1}$ being fixed.
Denote
\[f_a(\lambda):=\det\left[\ju{ccccc}{\frac{d_1}{a}+X^2-\frac{\lambda}{a}&a_2&\cdots&a_{n-1}&-aXY\\
\frac{a_2}{a}&d_2-\lambda&\cdots&0&0\\
\vdots&\vdots&\ddots&\vdots\\
\frac{a_{n-1}}{a}&0&\cdots&d_{n-1}-\lambda&0\\
-XY&0&\cdots&0&aY^2-\lambda}\right],\]
the eigenvalues of $Q$ satisfies $f_a(\lambda)=0.$
A direct calculation yields
\[\begin{aligned}
f_a(\lambda)&=\left[\left(X^2+d_1/a-\lambda/a\right)(aY^2-\lambda)-aX^2Y^2\right]\prod\limits_{\alpha=2}^{n-1}(d_\alpha-\lambda)\\
&-\sum\limits_{\beta=2}^{n-1}\frac{a^2_{\beta}}{d_\beta-\lambda}\prod\limits_{\alpha=2}^{n-1}(d_\alpha-\lambda)\cdot(Y^2-\lambda/a).
\end{aligned}\]
For $a=\infty,$ $f_{\infty}(\lambda)$ is a polynomial of degree $n-1$ with coefficients only depending on $d_1, \cdots, d_{n-1},$ $X, Y,$ and $a_2, \cdots, a_{n-1}.$ Therefore, it has $n-1$ roots $\tilde d_1, \cdots, \tilde d_{n-1}$ that are depending on $d_1, \cdots, d_{n-1},$ $X, Y,$ and $a_2, \cdots, a_{n-1}.$ By continuity of the roods it follows that
$\lambda_a=\tilde d_a+o(1)$ for $a\leq n-1.$

To find the last eigenvalue set $\lambda=a\mu$. Then $\mu$ satisfies
\[\det\left[\ju{ccccc}{\frac{d_1}{a}+X^2-\mu &\frac{a_2}{a}&\cdots&\frac{a_{n-1}}{a}&-XY\\
\frac{a_2}{a}&\frac{d_2}{a}-\mu&\cdots&0&0\\
\vdots&\vdots&\ddots&\vdots\\
\frac{a_{n-1}}{a}&0&\cdots&\frac{d_{n-1}}{a}-\mu&0\\
-XY&0&\cdots&0&Y^2-\mu}\right]=0.\]
For $a=\infty,$ we see that $\mu=X^2+Y^2$ is a simple root. By the implicity function theorem it follows that for $a>0$ large there is a root
$\mu=X^2+Y^2+O(\frac{1}{a}),$ i.e., $\lambda_n=a\left(X^2+Y^2+O(\frac{1}{a})\right).$

Now, recall that $\ba$ is uniformly bounded in $E_1\setminus\bar D,$ we have
$B/M=(N-1)/\ba\goto \infty$ as $N\goto\infty.$ Therefore, we can apply the conclusion from above discussions to the matrix $\frac{1}{M}\text{Hessian}(\phi).$
In this case, correspondingly we have
\[d_1=\frac{w^3}{r}a_{11}, d_i=\frac{w}{r}a_{ii}, a_i=\frac{w^2}{r}a_{1i}\,\,\mbox{for $2\leq i\leq n-1$,} \]
and
\[a=\frac{B}{M}, X=\rho^{-2}\rho_1, Y=\rho^{-1}.\]
Note that if $\lambda$ is an eigenvalue of $\frac{1}{M}\text{Hessian}(\phi),$ then $M\lambda$ is an eigenvalue of $\text{Hessian}(\phi).$ Moreover, by our choice of coordinates we have $|\nabla\rho(p)|=\rho_1.$ The claim follows immediately.

By the Claim we can see that for all $1\leq m\leq k$
\[\sigma_m(\lambda(D^2\lp))\geq\sigma_m(\lambda(D^2\phi))-CM^{m-2}B\,\,\mbox{in $E_1\setminus\bar D,$}\]
where $C=C(\Gamma, A, \varphi)>0.$
In view of inequality \eqref{sub2} and equality \eqref{add2} we obtain there exists $N_1=N_1(\varphi, \Gamma, A)>0$ such that when $N\geq N_1,$ $\lp$ satisfies
\[\lambda(D^2\lp)\in\Gamma_k\,\,\mbox{and $\s_k(\lambda(D^2\lp))>1$ on $E_1\setminus\bar D$.}\]
This completes the proof of the proposition.

\end{proof}

\subsection{Subsolutions in $\mathbb R^n\setminus E_1$}
\label{subsection-2.2}
In the following, we will assume $N\geq N_1$ to be a fixed constant, we will also denote $\tvarphi:=\lp|_{\partial E_1}.$ Here, $N_1$ and $\lp$ are the same as the ones in Proposition
\ref{prosub1}.

First, recall that $s=\frac{1}{2}\sum a_ix_i^2,$ let
\[\oab(s)=\int_1^s(1+\alpha t^{-\beta})^{1/k}dt,\,\,\mbox{where $\alpha>0$ and $\beta>1$ to  be determined.}\]
In the following, when there is no confusion, we will drop the subscript $\alpha, \beta$ and write $\omega$ instead of $\oab.$
A direct calculation yields
\[\om'(s)=(1+\alpha s^{-\beta})^{1/k}\]
and
\[\om''(s)=\frac{1}{k}(1+\alpha s^{-\beta})^{1/k-1}(-\alpha\beta s^{-\beta-1})=-\frac{\alpha\beta \om'(s)}{k(s^{\beta+1}+\alpha s)}.\]
By page 5 of \cite{BLL14} we have
\[D_i\om(x)=\om'(s)a_ix_i=(1+\alpha s^{-\beta})^{1/k}a_ix_i\]
and
\be\label{add3}
\begin{aligned}
D_{ij}\om(x)&=\om'(s)a_i\delta_{ij}+\om''(s)(a_ix_i)(a_jx_j)\\
&=\om'(s)\lt[a_i\delta_{ij}-\frac{\alpha\beta}{ks(s^{\beta}+\alpha)}(a_ix_i)(a_jx_j)\rt].
\end{aligned}
\ee
In view of Proposition 1.2 of \cite{BLL14} we obtain
\be\label{sub5}\s_m(\lambda(D_{ij}\om))=[\om'(s)]^m\lt\{\s_m(a)-\frac{\alpha\beta}{ks(s^\beta+\alpha)}\sum\limits_{i=1}^n(a_ix_i)^2\s_{m-1}(a|i)\rt\},\ee
where $a=(a_1, \cdots, a_n)$ and $\s_{m-1}(a|i)=\s_{m-1}(a)|_{a_i=0}.$
Assume $a_{i_0}=\max\limits_{i}\{a_1, \cdots, a_n\},$ following \cite{BLL14}, let
\[
\begin{aligned}
h_m:&=\max\limits_{1\leq i\leq n}A_m^i(a)=\max\limits_{1\leq i\leq n}\s_{m-1}(a|i)a_i\\
&=\max\limits_{1\leq i\leq n}\{\s_m(a)-\s_m(a|i)\}=\s_m(a)-\s_m(a|i_0).
\end{aligned}
\]
Then we have
\be\label{sub6}\s_m(\lambda(D_{ij}\om))\geq[\om'(s)]^m\lt\{\s_m(a)-\frac{2\alpha\beta }{k(s^\beta+\alpha)}h_m\rt\}.\ee
In particular, when $m=k$ we get
\[\s_k(\lambda(D_{ij}\om))\geq\lt(1+\frac{\alpha}{s^\beta}\rt)\lt[1-\frac{2\alpha\beta h_k}{k(s^\beta+\alpha)}\rt].\]
According to equation (2.15) of \cite{BLL14}, we know that for $n\geq 3$ and $2\leq k\leq n-1,$ we have
\be\label{ad2.2*}\frac{k}{2}<\frac{k}{2h_k(a)}\leq\frac{n}{2}.\ee
In the following, we will set $\bm{\beta}=\mathbf{\frac{k}{2h_k}}-\bm{\eta},$ where $\eta>0$ is an arbitrary constant such that $\beta>k/2.$

In this case we have
\be\label{ad2.1}
\begin{aligned}
\s_k(\lambda(D_{ij}\om))&\geq\lt(1+\frac{\alpha}{s^\beta}\rt)\lt[1-\frac{2\alpha h_k}{k(s^\beta+\alpha)}\lt(\frac{k}{2h_k}-\eta\rt)\rt]\\
&=\lt(1+\frac{\alpha}{s^\beta}\rt)\lt[1-\frac{\alpha}{s^\beta+\alpha}+\frac{2\alpha h_k\eta}{k(s^\beta+\alpha)}\rt]\\
&=\frac{\alpha+s^\beta}{s^\beta}\lt[\frac{s^\beta}{s^\beta+\alpha}+\frac{2\alpha h_k\eta}{k(s^\beta+\alpha)}\rt]\\
&=1+\frac{2\alpha h_k\eta}{ks^\beta}.
\end{aligned}
\ee
Next, we will show that the $w_{\alpha,\beta}(s)$ constructed above  is a strictly $k$-convex function in $\mathbb R^n\setminus E_1.$
In view of \eqref{ad2.1}, we can see that we only need to show $\la(D^2_{ij}\omega)\in\Gamma_m$ in $\mathbb R^n\setminus E_1$ for $1\leq m<k.$
Note that
\[
\begin{aligned}
&\s_m(a)-\frac{\alpha\beta}{ks(s^\beta+\alpha)}\sum\limits_{i=1}^n\s_{m-1}(a|i)(a_ix_i)^2\\
&=\s_m(a)-\frac{\alpha}{ks(s^\beta+\alpha)}\sum\limits_{i=1}^n\s_{m-1}(a|i)(a_ix_i)^2\lt(\frac{k}{2h_k}-\eta\rt)\\
&=\s_m(a)-\frac{\alpha}{2h_ks(s^\beta+\alpha)}\sum\limits_{i=1}^n\s_{m-1}(a|i)(a_ix_i)^2+III\\
&\geq \s_m(a)-\frac{\alpha}{2s(s^\beta+\alpha)}\sum\limits_{i=1}^n\frac{\s_{m-1}(a|i)(a_ix_i^2)}{a_i\s_{k-1}(a|i)}+III,
\end{aligned}
\]
where $III:=\frac{\alpha\eta}{ks(s^\beta+\alpha)}\sum\limits_{i=1}^n\s_{m-1}(a|i)(a_ix_i)^2.$
By virtue of (2.22) of \cite{BLL14} we know
\[\frac{\s_{m-1}(a|i)}{\s_{k-1}(a|i)}\leq\s_m(a)\,\,\mbox{for $1\leq i\leq n$ and $1\leq m\leq k-1.$}\]
Therefore, we obtain
\be\label{sub3}
\begin{aligned}
&\s_m(a)-\frac{\alpha\beta}{ks(s^\beta+\alpha)}\sum\limits_{i=1}^n\s_{m-1}(a|i)(a_ix_i)^2\\
&\geq\s_m(a)-\frac{\alpha}{2s(s^\beta+\alpha)}\sum\limits_{i=1}^n\s_m(a)a_ix_i^2+III\\
&=\s_m(a)\lt(1-\frac{\alpha}{s^\beta+\alpha}\rt)+III.
\end{aligned}
\ee
Denote $\lh_m=\min\limits_{1\leq i\leq n}\s_{m-1}(a|i)a_i=\s_m(a)-\s_m(a|i_1),$
where $a_{i_1}=\min\{a_1, \cdots, a_n\}.$ Then
\[
\begin{aligned}
III&=\frac{\alpha\eta}{ks(s^\beta+\alpha)}\sum\limits_{i=1}^n\s_{m-1}(a|i)(a_ix_i)^2\\
&\geq\frac{2\alpha\eta\lh_m}{k(s^\beta+\alpha)}
\end{aligned}.
\]
Combining with \eqref{sub3} we have
\be\label{sub7}
\begin{aligned}
&\s_m(a)-\frac{\alpha\beta}{ks(s^\beta+\alpha)}\sum\limits_{i=1}^n\s_{m-1}(a|i)(a_ix_i)^2\\
&\geq\s_m(a)\frac{s^\beta}{s^{\beta}+\alpha}+\frac{2\alpha\eta\lh_m}{k(s^\beta+\alpha)}.
\end{aligned}
\ee
Thus, from \eqref{sub5} we get $\lambda(D_{ij}\om)\in\Gamma_k.$
We conclude
\begin{proposition}
\label{prosub2}
For $n\geq 3,$ $2\leq k\leq n-1,$ and $A\in\mathcal A_k,$ let $\omega_{\alpha, \beta}(x)=\int_1^s(1+\alpha t^{-\beta})^{1/k}dt$ for $s=\frac{1}{2}x^TAx.$
Then when $\alpha>0$, $\frac{k}{2}<\beta<\frac{k}{2h_k},$ $\oab$ is a smooth strictly $k$-convex subsolution of $\s_k(\lambda(D^2u))=1$ in $\mathbb R^n\setminus\{0\}.$ Moreover,
$\oab$ satisfies
\be\label{sub8}
\oab=\frac{1}{2}x^TAx+\mu(\alpha, \beta)+O(s^{1-\beta}),\,\,\mbox{as $s\goto\infty.$}
\ee
Here, $\mu(\alpha, \beta)=\int_1^{\infty}[(1+\alpha t^{-\beta})^{1/k}-1]dt-1.$
\end{proposition}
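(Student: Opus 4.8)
The plan is to read off Proposition~\ref{prosub2} from the identities and inequalities already assembled in this subsection, supplying only the short asymptotic computation at the end. First I would record the regularity: since $A\in\mathcal A_k$ is positive definite, $s=\tfrac12 x^TAx>0$ on $\mathbb R^n\setminus\{0\}$, so on the relevant range of $s$ the integrand $t\mapsto(1+\alpha t^{-\beta})^{1/k}$ is smooth and bounded away from $0$; hence $\oab\in C^\infty(\mathbb R^n\setminus\{0\})$, $\om'(s)=(1+\alpha s^{-\beta})^{1/k}>0$, and $D^2\oab$ is given by \eqref{add3}. Next, given the hypothesis $\tfrac k2<\beta<\tfrac{k}{2h_k}$, I would set $\eta:=\tfrac{k}{2h_k}-\beta$, so that $\eta>0$, $\beta>k/2$, and $\beta=\tfrac{k}{2h_k}-\eta$; this is exactly the normalization under which \eqref{ad2.1}, \eqref{sub3}, and \eqref{sub7} were derived, so all of those estimates become available verbatim for the full open range of $\beta$.

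With this in place, the subsolution property is immediate from \eqref{sub5} (via Proposition~1.2 of \cite{BLL14}) and \eqref{ad2.1}:
\[
\s_k(\lambda(D^2\oab))\ \geq\ 1+\frac{2\alpha h_k\eta}{k s^{\beta}}\ >\ 1\qquad\text{in }\mathbb R^n\setminus\{0\},
\]
so in fact $\oab$ is a strict classical (hence viscosity) subsolution of $\s_k(\lambda(D^2u))=1$. For strict $k$-convexity I would treat $1\le m<k$ using \eqref{sub5} together with \eqref{sub7}:
\[
\s_m(\lambda(D^2\oab))=[\om'(s)]^m\Bigl\{\s_m(a)-\tfrac{\alpha\beta}{k s(s^\beta+\alpha)}\textstyle\sum_{i=1}^n(a_ix_i)^2\s_{m-1}(a|i)\Bigr\}\ \geq\ [\om'(s)]^m\Bigl(\s_m(a)\tfrac{s^\beta}{s^\beta+\alpha}+\tfrac{2\alpha\eta\,\lh_m}{k(s^\beta+\alpha)}\Bigr)>0,
\]
since $a=\lambda(A)\in\Gamma_n$ forces $\s_m(a)>0$, while $\lh_m\ge 0$ (with $\lh_1=\min_i a_i>0$) and $\om'(s)>0$. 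Combining with the $m=k$ bound above shows $\lambda(D^2\oab)\in\Gamma_k$ throughout $\mathbb R^n\setminus\{0\}$, i.e.\ $\oab$ is strictly $k$-convex there.

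It remains to prove \eqref{sub8}. I would split
\[
\oab(s)=\int_1^s(1+\alpha t^{-\beta})^{1/k}\,dt=(s-1)+\int_1^s\bigl[(1+\alpha t^{-\beta})^{1/k}-1\bigr]\,dt .
\]
Because $k\ge 2$, we have $0\le(1+\alpha t^{-\beta})^{1/k}-1\le\tfrac{\alpha}{k}t^{-\beta}$ for $t\ge 1$, and $\beta>k/2\ge 1$, so the improper integral $\int_1^\infty[(1+\alpha t^{-\beta})^{1/k}-1]\,dt$ converges; writing $\int_1^s=\int_1^\infty-\int_s^\infty$ gives
\[
\oab(s)=s+\Bigl(\int_1^\infty\bigl[(1+\alpha t^{-\beta})^{1/k}-1\bigr]\,dt-1\Bigr)-\int_s^\infty\bigl[(1+\alpha t^{-\beta})^{1/k}-1\bigr]\,dt=\tfrac12 x^TAx+\mu(\alpha,\beta)+O(s^{1-\beta}),
\]
where the tail is controlled by $0\le\int_s^\infty[(1+\alpha t^{-\beta})^{1/k}-1]\,dt\le\tfrac{\alpha}{k(\beta-1)}s^{1-\beta}$.

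Since every ingredient has already been established, there is no genuine obstacle here; the only points that warrant care are checking that the substitution $\eta=\tfrac{k}{2h_k}-\beta$ indeed makes the earlier inequalities applicable for the whole open interval $(k/2,\,k/2h_k)$, and keeping the tail integral in the expansion under control so that the correct exponent $1-\beta$ (not $-\beta$) appears in the error term and the convergence of the integral defining $\mu(\alpha,\beta)$ is justified.
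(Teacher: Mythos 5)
Your proposal is correct and follows essentially the same route as the paper: the subsolution property and strict $k$-convexity are read off from \eqref{ad2.1} and \eqref{sub7} after the substitution $\eta=\tfrac{k}{2h_k}-\beta$, and the expansion \eqref{sub8} is obtained by the identical splitting $\int_1^s=\int_1^\infty-\int_s^\infty$ with the tail bounded by $O(s^{1-\beta})$ using $\beta>k/2\ge 1$. If anything, you are slightly more explicit than the paper (e.g.\ the pointwise bound $(1+\alpha t^{-\beta})^{1/k}-1\le\tfrac{\alpha}{k}t^{-\beta}$ justifying convergence of $\mu(\alpha,\beta)$), but the argument is the same.
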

\begin{proof}
From \eqref{ad2.1} and \eqref{sub7} we know that $\oab$ is a smooth strictly $k$-convex subsolution of $\s_k(\lambda(D^2u))=1$ in $\mathbb R^n\setminus\{0\}.$
To prove this proposition, we only need to prove \eqref{sub8}. A straightforward calculation yields
\[
\begin{aligned}
\oab&=\int_1^s(1+\alpha t^{-\beta})^{1/k}dt\\
&=\int_1^s\lt[(1+\alpha t^{-\beta})^{1/k}-1\rt]dt+s-1\\
&=\int_1^\infty\lt[(1+\alpha t^{-\beta})^{1/k}-1\rt]dt-\int_s^\infty\lt[(1+\alpha t^{-\beta})^{1/k}-1\rt]dt+s-1\\
&=s+\mu(\alpha, \beta)+O(s^{1-\beta}).
\end{aligned}
\]
Here, $\beta=\frac{k}{2h_k}-\eta>\frac{k}{2}$ and
\[\mu(\alpha, \beta)=\int_1^\infty\lt[(1+\alpha t^{-\beta})^{1/k}-1\rt]-1<\infty.\]
\end{proof}
\begin{remark}
We want to point out that for any fixed $\frac{k}{2}<\beta<\frac{k}{2h_k},$ there exists $\alpha^*=\alpha^*(\beta, A, \varphi, D)>0$ such that when
$\alpha\geq\alpha^*,$ we have $\frac{1}{2}x^TAx+\mu(\alpha, \beta)\geq\varphi$ on $\Gamma.$
\end{remark}

Finally, we will use $\oab$ to construct a subsolution of $\s_k(\lambda(D^2u))=1$ in
$\mathbb R^n\setminus\bar E_1.$

Recall that we have set $\tvarphi=\lp|_{\partial E_1}.$ We may express $\tvarphi$ as a function of $\mathbb S^{n-1},$ i.e., $\tvarphi$ is independent of $r.$
Then set $\Psi=s^{-\Lambda}\tvarphi,$ where $\Lambda>0$ is a constant to be determined. Same as before, we choose a local orthonormal frame
$\{e_1, \cdots, e_{n-1}\}$ on the unit sphere $\mathbb S^{n-1}$ and let $\tau_r=\frac{\partial}{\partial r}.$
It is easy to see that for $1\leq a, b\leq n-1$
\[s_a=O(s),\,\,s_{ab}=O(s),\,\, s_{ar}=O(s^{1/2}),\]
\[s_r=O(s^{1/2}),\,\,\mbox{and $s_{rr}=O(1).$}\]
This implies
\[\begin{aligned}
\Psi_a&=-\Lambda s^{-\Lambda-1}\tvarphi s_a+s^{-\Lambda}\tvarphi_a=O(s^{-\Lambda}),\\
\Psi_{ab}&=\Lambda(\Lambda+1)s^{-\Lambda-2}\tvarphi s_as_b-\La s^{-\La-1}\tvarphi_bs_a\\
&-\La s^{-\La-1}\tvarphi s_{ab}-\La s^{-\La-1}\tvarphi_as_b+s^{-\La}\tvarphi_{ab}=O(s^{-\La}),\\
\Psi_{ar}&=-\La s^{-\La-1}\tvarphi s_{ar}+\La(\La+1)s^{-\La-2}\tvarphi s_as_r-\La s^{-\La-1}\tvarphi_as_r=O(s^{-\La-1/2})\\
\Psi_r&=-\La s^{-\La-1}s_r\tvarphi=O(s^{-\La-1/2})\\
\Psi_{rr}&=\La(\La+1)s^{-\La-2}(s_r)^2\tvarphi-\La s^{-\La-1}s_{rr}\tvarphi=O(s^{-\La-1})
\end{aligned}
\]
Consider
\[\lp^1=\int_1^s(1+\alpha t^{-\beta})^{1/k}dt+\Psi=\oab+\Psi,\]
then by \eqref{hess1.1}-\eqref{hess1.3} and \eqref{add3} we obtain
\[D_{ij}\lp^1=w'(s)\lt[a_i\delta_{ij}-\frac{\alpha\beta}{ks(s^\beta+\alpha)}(a_ix_i)(a_jx_j)\rt]+\frac{b_{ij}}{S^{\La+1}},\]
where $|b_{ij}|_{1\leq i, j\leq n}$ are uniformly bounded by some constant $C=C(|\tvarphi|_{C^2}, A, \La )>0.$
Now let's fix some $\La$ such that $\La+1\geq\beta>k/2,$ by virtue of \eqref{sub5}  we can see that
\be\label{add4}
\begin{aligned}
\s_m(\la(D^2\lp^1))&=\s_m(\la(D^2\oab+D^2\Psi))\\
&\geq\s_m(\la(D^2\oab))-\sum\limits_{i=1}^m\frac{C_i}{s^{(\La+1)i}}(\omega'(s))^{m-i}\\
&= \s_m(\la(D^2\oab))-\sum\limits_{i=1}^m\lt(1+\frac{\alpha}{s^\beta}\rt)^{\frac{m-i}{k}}\frac{C_i}{s^{(\La+1)i}},
\end{aligned}
\ee
where $C_i=C_i(A, \Lambda, |\tvarphi|_{C^2})>0.$
Denote $y:=\frac{\alpha}{s^{\beta}}$ and $\frac{\Lambda+1}{\beta}:=1+\gamma,$ when $m=k$ plugging \eqref{ad2.1} into \eqref{add4} gives
\begin{align*}
\s_k(\la(D^2\lp^1))&\geq 1+\frac{2h_k\eta}{k}y-\sum\limits_{i=1}^k(1+y)^{\frac{k-i}{k}}C_i\lt(\frac{y}{\alpha}\rt)^{(1+\gamma)i}\\
&\geq 1+y\lt[\frac{2h_k\eta}{k}-\sum\limits_{i=1}^k(1+y)^{\frac{k-i}{k}}C_i\lt(\frac{y}{\alpha}\rt)^{(1+\gamma)i-1}\frac{1}{\alpha}\rt]\\
&\geq 1+y\lt[\frac{2h_k\eta}{k}-\sum\limits_{i=1}^k(1+\alpha)^{\frac{k-i}{k}}C_i\frac{1}{\alpha}\rt].
\end{align*}
Here, we have used the fact that in $\mathbb R^n\setminus\bar E_1$ we have $0<y<\alpha.$ Therefore,
there exists $\alpha_0=\alpha_0(A, \Lambda, |\tilde\varphi|_{C^2},\beta, k)>0$ such that for any $\alpha>\alpha_0$
\[\s_k(\la(D^2\lp^1))>1\,\,\mbox{in $\mathbb R^n\setminus\bar E_1.$}\]

When $1\leq m\leq k-1,$ since we have shown $\la(D^2\oab)\in\Gamma_k,$ by Maclaurin's inequality we have
\[\frac{\s_m(\la(D^2\oab))}{C_n^m}\geq\lt(\frac{\s_k(\la(D^2\oab))}{C_n^k}\rt)^{\frac{m}{k}}.\]
Combining with \eqref{ad2.1} we obtain
\[\s_m(\la(D^2\oab))\geq\frac{C^m_n}{(C_n^k)^{\frac{m}{k}}}\lt(1+\frac{2h_k\eta}{k}y\rt)^{\frac{m}{k}}.\]
It follows that
\begin{align*}
\s_m(\la(D^2\lp^1))&\geq y^{\frac{m}{k}}\lt[\frac{C^m_n}{(C_n^k)^{\frac{m}{k}}}\lt(\frac{1}{y}+\frac{2h_k\eta}{k}\rt)^{\frac{m}{k}}
-\sum\limits_{i=1}^m(1+y)^{\frac{m-i}{k}}C_i\lt(\frac{y}{\alpha}\rt)^{(1+\gamma)i-\frac{m}{k}}\frac{1}{\alpha^{\frac{m}{k}}}\rt]\\
&\geq y^{\frac{m}{k}}\lt[\frac{C^m_n}{(C_n^k)^{\frac{m}{k}}}\lt(\frac{1}{y}+\frac{2h_k\eta}{k}\rt)^{\frac{m}{k}}
-\sum\limits_{i=1}^m(1+\alpha)^{\frac{m-i}{k}}C_i\frac{1}{\alpha^{\frac{m}{k}}}\rt].
\end{align*}

Therefore, there exists $\alpha_0=\alpha_0(A, \Lambda, |\tilde\varphi|_{C^2},\beta, k)>0$ such that for any $\alpha>\alpha_0$
\[\s_m(\lambda(D^2\lp^1))>0\,\,\mbox{in $\mathbb R^n\setminus\bar E_1.$}\]
We conclude this subsection with the following proposition.
\begin{proposition}
\label{prosub3}
For $n\geq 3,$ $2\leq k\leq n-1,$ and $A\in\mathcal A_k,$ denote $$\omega_{\alpha, \beta}(x)=\int_1^s(1+\alpha t^{-\beta})^{1/k}dt$$ where $s=\frac{1}{2}x^TAx.$
Then given any $\frac{k}{2}<\beta<\frac{k}{2h_k},$ $\La\geq\beta-1,$ and $\tvarphi\in C^{\infty}(S^{n-1}),$ there exists $\alpha_0=\alpha_0(A, \La, |\tvarphi|_{C^2}, \beta, k)>0$ such that
when $\alpha>\alpha_0$
\be\label{sub9}
\lp^1=\oab+s^{-\La}\tvarphi
\ee
is a smooth strictly $k$-convex subsolution of $\s_k(\lambda(D^2u))=1$ in $\mathbb R^n\setminus E_1$ satisfying
$\lp^1=\tvarphi$ on $\p E_1.$
\end{proposition}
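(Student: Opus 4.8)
The plan is to check the two assertions of the proposition in turn: the boundary identity and the strict $k$-convexity together with the subsolution inequality. The boundary condition is immediate, since on $\p E_1$ one has $s=1$, whence $s^{-\La}\tvarphi=\tvarphi$ and $\oab(1)=0$, so $\lp^1=\tvarphi$ on $\p E_1$. Smoothness of $\lp^1$ on $\mathbb R^n\setminus E_1$ is also clear: $\oab$ is smooth on $\mathbb R^n\setminus\{0\}$ by Proposition \ref{prosub2}, and $s^{-\La}\tvarphi$ is smooth wherever $s>0$, which holds on $\mathbb R^n\setminus E_1$ since there $s\geq 1$.

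The substance is the lower bound for $\s_m(\la(D^2\lp^1))$, $1\leq m\leq k$. Writing $\Psi=s^{-\La}\tvarphi$ and viewing $\tvarphi$ as an $r$-independent function on $\mathbb S^{n-1}$, I compute the spherical-coordinate Hessian of $\Psi$ from \eqref{hess1.1}--\eqref{hess1.3}; using $s_a=O(s)$, $s_{ab}=O(s)$, $s_r=O(s^{1/2})$, $s_{ar}=O(s^{1/2})$, $s_{rr}=O(1)$, every entry of $D^2\Psi$ takes the form $b_{ij}/s^{\La+1}$ with $|b_{ij}|\leq C(|\tvarphi|_{C^2},A,\La)$. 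Adding this to the Hessian \eqref{add3} of $\oab$ and applying the standard perturbation inequality for $\s_m$ (the mechanism behind \eqref{sub5}), I arrive at \eqref{add4}, i.e.
\[
\s_m(\la(D^2\lp^1))\geq\s_m(\la(D^2\oab))-\sum_{i=1}^m\lt(1+\frac{\alpha}{s^\beta}\rt)^{\frac{m-i}{k}}\frac{C_i}{s^{(\La+1)i}},\qquad C_i=C_i(A,\La,|\tvarphi|_{C^2})>0.
\]

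The main obstacle is to absorb this perturbation into the strict positivity margin that the choice $\beta=\frac{k}{2h_k}-\eta<\frac{k}{2h_k}$ builds into $\oab$. I set $y:=\alpha s^{-\beta}$, which satisfies $0<y<\alpha$ on $\mathbb R^n\setminus\bar E_1$, and fix $\La$ with $\La+1\geq\beta$, writing $\frac{\La+1}{\beta}=1+\gamma$, $\gamma\geq 0$; then each term in the sum above carries a factor $(y/\alpha)^{(1+\gamma)i}$ times a bounded quantity. For $m=k$, substituting \eqref{ad2.1} gives
\[
\s_k(\la(D^2\lp^1))\geq 1+y\lt[\frac{2h_k\eta}{k}-\sum_{i=1}^k(1+\alpha)^{\frac{k-i}{k}}C_i\frac{1}{\alpha}\rt],
\]
and the bracket is positive once $\alpha$ exceeds some $\alpha_0=\alpha_0(A,\La,|\tvarphi|_{C^2},\beta,k)$, so $\s_k(\la(D^2\lp^1))>1$. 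For $1\leq m\leq k-1$, I first use $\la(D^2\oab)\in\Gamma_k$ (Proposition \ref{prosub2}) and Maclaurin's inequality to get $\s_m(\la(D^2\oab))\geq\frac{C_n^m}{(C_n^k)^{m/k}}(1+\frac{2h_k\eta}{k}y)^{m/k}$, and then the same bookkeeping yields $\s_m(\la(D^2\lp^1))\geq y^{m/k}[\,\cdot\,]>0$ for $\alpha$ large. Enlarging $\alpha_0$ finitely many times to cover all $m\in\{1,\dots,k\}$, I conclude that $\lp^1$ is strictly $k$-convex in $\mathbb R^n\setminus E_1$ and satisfies $\s_k(\la(D^2\lp^1))>1$ there; combined with the boundary identity and smoothness, this proves the proposition.
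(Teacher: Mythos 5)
Your proposal is correct and follows essentially the same route as the paper: the same estimate $D^2(s^{-\La}\tvarphi)=O(s^{-\La-1})$ entrywise via \eqref{hess1.1}--\eqref{hess1.3}, the same perturbation bound \eqref{add4}, the substitution $y=\alpha s^{-\beta}$ with $0<y<\alpha$ on $\mathbb R^n\setminus\bar E_1$ and \eqref{ad2.1} for $m=k$, and Maclaurin's inequality for $1\leq m\leq k-1$, choosing $\alpha_0$ large to absorb the error terms. No gaps.
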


\subsection{Subsolutions of \eqref{1.1} in $\mathbb R^n\setminus D$}
In this subsection, we will follow the idea of \cite{CL03} to glue $\lp$ and $\lp^1$ together to obtain a subsolution of \eqref{1.1} in $\mathbb R^n\setminus D.$

Recall that $\lp=\ba^N-1+\varphi,$ following earlier notations, a direct calculation gives
\[e_a\lp=N\ba^{N-1}\ba_a+\varphi_a,\]
\[\tau_r\lp=N\ba^{N-1}\ba_r,\]
this implies
\[|D\lp|\leq C_0=C_0(N, |\varphi|_{C^1}, \Gamma, A)\,\,\mbox{on $\partial E_1.$}\]
Next, we compute $|D\lp^1|$ on $\p E_1$
\[
\begin{aligned}
e_a\lp^1&=\frac{\partial\lp^1}{\partial s}s_a+s^{-\La}\tvarphi_a\\
&=\lt[(1+\alpha)^{1/k}-\La\tvarphi\rt]s_a+\tvarphi_a,
\end{aligned}
\]
and
\[\tau_r\lp^1=\lt[(1+\alpha)^{1/k}-\La\tvarphi\rt]s_r,\]
where $\tvarphi=\lp|_{\partial E_1}.$
Hence, when $\alpha>\alpha_1=\alpha_1( C_0, \La, |\tvarphi|_{C^1}, A)>0$ we have
\[|D\lp^1|>|D\lp|\,\,\mbox{on $\partial E_1.$}\]
Let $\nu$ be the outward unit normal of $\partial E_1$, i.e., pointing into $\mathbb R^n\setminus\bar E_1,$ since $\lp^1=\lp$
on $\partial E_1,$ we have
\be\label{sub10}
D_\nu\lp^1>D_\nu\lp
\ee
on $\partial E_1.$

We will denote
\be\label{subsolution}
\lu:=\left\{\begin{aligned}
&\lp\,\,&x\in\bar E_1\setminus D,\\
&\lp^1\,\, &x\in\mathbb R^n\setminus E_1.
\end{aligned}\right.
\ee
It is clear that $\lu$ is continuous on $\mathbb R^n\setminus D$. In next section we will show that $\lu$ is a viscosity subsolution of \eqref{1.1}.

\bigskip
\section{Proof of the existence part of theorem \ref{thm1}}
\label{secthm1}
In this section we will prove the existence part of Theorem \ref{thm1}.
We will assume $N, \alpha, \frac{k}{2}<\beta<\frac{k}{2h_k},$ and $\La>\beta-1$ are fixed constants such that the following conditions are satisfied:
\begin{itemize}
\item[(a).] $\lp=\ba^N-1+\varphi$ is strictly $k$-convex and satisfies $\s_k(\la(D^2\lp))>1$ in $E_1\setminus \bar D;$
\item[(b).] $\frac{1}{2}x^TAx+\mu(\alpha, \beta)\geq\varphi$ on $\Gamma;$
\item[(c).] $\lp^1=\oab+s^{-\La}\tvarphi$ satisfies Proposition \ref{prosub3} and equation \eqref{sub10}.
\end{itemize}
Here $\tvarphi=\lp|_{\p E_1}$ is a fixed smooth function determined by $N, \varphi, A$ and $\Gamma.$

Let $\lu$ be the function defined by \eqref{subsolution}, then
in view of of Proposition \ref{prosub2} we get
\[\lu=\lp^1=s+\mu(\alpha, \beta)+O(s^{1-\beta})\,\,\mbox{as $s\goto\infty.$}\]
Now, let $\bar u_R=\frac{1}{2}x^TAx+\mu(\alpha, \beta)+\bar CR^{1-\beta},$ where $\bar C=\bar C(\alpha, \beta)>0$ is chosen such that
$\bar u_R>\lu$ on $\partial E_R.$ Here and in the following, without loss of generality, we will always assume $R\geq R_0\gg1$ is an arbitrarily large constant.
We will also assume $\bar C>0$ is a fixed constant.
Moreover, in view of condition (b) we have $\bar u_R>\lu=\varphi$ on $\Gamma.$

Now, let us consider the following Dirichlet problem
\be\label{dirichlet}
\left\{\begin{aligned}
\s_k(\la(D^2u))&=1\,\,&\mbox{in $\Omega_R:=E_R\setminus\bar D$}\\
u&=\varphi\,\,&\mbox{on $\Gamma:=\partial D$}\\
u&=\bar u_R\,\,&\mbox{on $\partial E_R$}.
\end{aligned}
\right.
\ee
We will show that for any $R\geq R_0\gg1$ there exists a smooth, strictly $k$-convex solution $u_R\in C^{\infty}(E_R\setminus\bar D)$ of \eqref{dirichlet}.

\subsection{$C^0$ estimates}
\begin{lemma}
\label{lem-c0}
Let $u_R$ be the strictly $k$-convex solution of \eqref{dirichlet}, then $u_R$ satisfies
\[\lu<u_R<\bar u_R\,\,\mbox{in $E_R\setminus\bar D$.}\]
\end{lemma}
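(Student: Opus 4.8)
The plan is to establish the two-sided bound by a comparison-principle argument, treating the upper and lower bounds separately. For the upper bound $u_R < \bar u_R$ in $E_R\setminus\bar D$, the key observation is that $\bar u_R = \frac12 x^T A x + \mu(\alpha,\beta) + \bar C R^{1-\beta}$ is a classical solution of $\s_k(\lambda(D^2 v)) = \s_k(\lambda(A)) = 1$ (since $A\in\mathcal A_k$), hence in particular a (super)solution of the equation in $\Omega_R$. On the boundary we have $\bar u_R > \lu = \varphi$ on $\Gamma$ by condition (b), and $\bar u_R > \lu = u_R$ on $\partial E_R$ by the choice of $\bar C$ and the Dirichlet data in \eqref{dirichlet}; thus $\bar u_R \ge u_R$ on $\partial\Omega_R$. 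Since both $u_R$ and $\bar u_R$ are strictly $k$-convex (so $\lambda(D^2u_R),\lambda(D^2\bar u_R)\in\Gamma_k$) and solve the same uniformly elliptic equation along the segment joining them, the comparison principle for $\s_k$ in the positive cone (or directly the maximum principle applied to the linearized operator $\int_0^1 \frac{\partial \s_k}{\partial a_{ij}}(tD^2u_R+(1-t)D^2\bar u_R)\,dt\,\partial_{ij}(\bar u_R - u_R)\ge 0$) yields $\bar u_R \ge u_R$ in $\overline{\Omega_R}$, and the strong maximum principle upgrades this to the strict inequality in the interior.

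For the lower bound $\lu < u_R$, I would argue that $\lu$, as defined in \eqref{subsolution}, is a viscosity subsolution of \eqref{1.1} in $\mathbb R^n\setminus D$ — this is exactly what the paper announces will be proved in this section, so I may invoke it — and in particular a viscosity subsolution in $\Omega_R$. On the boundary, $\lu = \varphi = u_R$ on $\Gamma$, and $\lu < \bar u_R = u_R$ on $\partial E_R$; hence $\lu \le u_R$ on $\partial\Omega_R$. The comparison principle between a viscosity subsolution $\lu$ and the smooth (hence viscosity super-) solution $u_R$ of $\s_k(\lambda(D^2u))=1$ on the bounded domain $\Omega_R$, with the correct ordering of boundary values, gives $\lu \le u_R$ in $\overline{\Omega_R}$. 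To get the strict inequality in the interior, note that on the open set $E_1\setminus\bar D$ we have $\lu = \lp$ with $\s_k(\lambda(D^2\lp)) > 1$ (condition (a)), and on $\mathbb R^n\setminus\bar E_1$ we have $\lu = \lp^1$ with $\s_k(\lambda(D^2\lp^1)) > 1$ (condition (c) via Proposition \ref{prosub3}); in either region $\lu$ is a strict classical subsolution, so $\lu - u_R$ cannot attain an interior maximum value of $0$ by the strong maximum principle. The only place needing care is the gluing interface $\partial E_1$: there the corner inequality \eqref{sub10}, $D_\nu\lp^1 > D_\nu\lp$, guarantees that $\lu$ has a genuine downward corner, so it remains a viscosity subsolution across $\partial E_1$ and no interior touching can occur there either.

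The main obstacle I anticipate is handling the viscosity-solution bookkeeping at the interface $\partial E_1$ cleanly: one must be sure that the $C^0$ comparison principle applies to $\lu$, which is only Lipschitz (not $C^2$) across $\partial E_1$, and that the strict-inequality conclusion is not destroyed at that hypersurface. Both issues are resolved by the convexity of the corner encoded in \eqref{sub10} — a downward kink is always admissible for a subsolution and can never be touched from below by a $C^2$ function in a way that violates the subsolution property — but spelling this out rigorously (e.g. by a standard mollification/sup-convolution of $\lu$ near $\partial E_1$, or by directly checking the viscosity inequality using test functions) is the technical heart of the argument. Everything else is a routine application of the comparison and strong maximum principles for the elliptic operator $\s_k$ restricted to the cone $\Gamma_k$, where $\s_k^{1/k}$ is concave and the equation is uniformly elliptic on solutions.
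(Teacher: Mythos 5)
Your proposal is correct and follows essentially the same route as the paper: the upper bound is the same maximum-principle comparison with the exact solution $\bar u_R$, and for the lower bound the paper's sliding argument (shift $\lu$ down until it touches $u_R$, rule out touching in the two open regions by strict classical subsolubility, and rule out touching on $\partial E_1$ by the corner inequality \eqref{sub10}) is exactly your viscosity comparison principle unpacked. The only slips are cosmetic: on $\partial E_R$ one has $u_R=\bar u_R$ (not $u_R=\lu$), and for a subsolution the test functions touch from \emph{above}, which is precisely why the convex kink across $\partial E_1$ admits no $C^2$ test function there.
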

\begin{proof}
Since $u_R\leq\bar u_R$ on $\partial\Omega_R$ and
$\s_k(\la(D^2u_R))=\s_k(\la(D^2\bar u_R))=1.$
By the strong maximum principle we obtain the second inequality.

Now, we consider the first inequality. We know that for some $\delta<0$ small, we have
\be\label{c01}u_R\geq\lu+\delta,\,\,x\in\bar\Omega_R.\ee
Suppose $\bar\delta$ is the largest number for which inequality \eqref{c01} holds, we will show $\bar\delta=0.$ If not, then $\bar\delta<0$
and there exists $\bar x\in \Omega_R$ such that
\[u_R(\bar x)=\lu(\bar x)+\bar\delta.\]
Since on $\lt(E_R\setminus\bar E_1\rt)\cup\lt(E_1\setminus\bar D\rt)$ we get
\[\s_k(\lambda(D^2\lu))>\s_k(\lambda(D^2u_R)).\]
By the standard maximum principle we know $\bar x\notin\lt(E_R\setminus\bar E_1\rt)\cup\lt(E_1\setminus\bar D\rt).$
We conclude that $\bar x\in\partial E_1.$ However, this is impossible. Since by \eqref{sub10} and \eqref{subsolution} we have,
\[\lim\limits_{x\goto\partial E_1^-}D_\nu\lu<\lim\limits_{x\goto\partial E_1^+}D_\nu\lu\]  while $u_R$ is smooth.
This implies $\bar\delta=0$ and the lemma follows from the strong maximum principle.
\end{proof}

\begin{remark}\label{rmk3.1.1}
With a small modification of the proof of Lemma \ref{lem-c0}, one can show that $\lu$ is a viscosity subsolution of \eqref{1.1}.
\end{remark}

\subsection{$C^1$ estimates}
In order to prove the $C^1$ estimate of $u_R$ on $\Gamma$ we need to construct an upper barrier on $\Gamma$ first.
\subsubsection{Construction of the upper barrier on $\Gamma$}Recall that $\Gamma$ is a smooth $(n-1)$-dimensional manifold, we know that there exists $\eta_0> 0$ such that for any $\hat p\in\Gamma$ and $\eta\leq\eta_0$
there exists $z_{\hat p, \eta}\in D$ satisfying $\bar B_{\eta}(z_{\hat p, \eta})\cap\Gamma=\hat p.$
Now, fix an arbitrary $\hat p\in\Gamma,$ we may choose a new coordinate $\{\hat x_1, \cdots, \hat x_n\}$ of $\mathbb R^n$ such that $\hat p$ is the origin. We also let $\hat x_n$ axis be the unit normal of $\Gamma$ at $\hat p$ pointing into $D.$ We will first restrict ourselves to a small neighborhood of
$\hat p.$ Denote $U_\delta=\{\hat x'=(\hat x_1, \cdots, \hat x_{n-1})\in\mathbb R^{n-1}: |\hat x'|<\delta\}$ for some fixed small constant $\delta>0,$ then near $\hat p$ the boundary $\Gamma$ can be written as a graph over $U_\delta$
\[\gamma(\hat x')=\frac{1}{2}\sum\limits_{\beta=1}^{n-1}\kappa_{\beta}\hat x^2_\beta+O(|\hat x'|^3),\]
where $(\kappa_1, \cdots, \kappa_{n-1})$ are the principle curvature vector of $\Gamma$ at $\hat p.$
Furthermore, the function $\varphi$ can be written as a function over $U_\delta:$
\be\label{varphi-function}
\varphi(\hat x', \gamma(\hat x'))=\varphi(0)+\sum\limits_{\alpha=1}^{n-1}\varphi_{\alpha}(0)\hat x_\alpha+O(|\hat x'|^2).
\ee
 Let
\be\label{c11}\hat u=-C|\hat x-z_{\hat p, \eta}|^{-(n-2)}+C\eta^{-(n-2)}+\varphi(0)+\sum\limits_{\alpha=1}^{n-1}\varphi_\alpha(0)\hat x_\alpha,\ee
where $C>0$ will be determined later.
We note that $\hat u$ is a smooth function defined on $\mathbb R^n\setminus\{z_{\hat p, \eta}\}.$

\begin{lemma}\label{ad-lem1} If $C=C(\varphi, \Gamma)>0$ is chosen to be large enough and $\eta=\eta(\Gamma)\leq\eta_0$ is chosen to be small enough, then $\hat u>\varphi$ on $\Gamma\setminus\{0\}$ and $\hat u(0)=\varphi (0).$\end{lemma}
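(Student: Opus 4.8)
The plan is to verify the two assertions separately, the equality being trivial and the inequality requiring a local-versus-global argument. At $\hat x=0$ we have $|\hat x-z_{\hat p,\eta}|=\eta$ (since $\bar B_\eta(z_{\hat p,\eta})\cap\Gamma=\hat p=0$), so $-C|\hat x-z_{\hat p,\eta}|^{-(n-2)}+C\eta^{-(n-2)}$ vanishes and the linear term vanishes, giving $\hat u(0)=\varphi(0)$. For the inequality $\hat u>\varphi$ on $\Gamma\setminus\{0\}$ I would split $\Gamma$ into a small neighborhood $\Gamma\cap\{|\hat x'|<\delta\}$ of $\hat p$ and its complement $\Gamma\cap\{|\hat x'|\geq\delta\}$, treating the two regions by different mechanisms and then fixing the constants in the right order: first $\delta$ (depending only on $\Gamma,\varphi$), then $C$ large, then $\eta$ small.

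On the far region $\{|\hat x'|\geq\delta\}$, the function $g(\hat x):=-C|\hat x-z_{\hat p,\eta}|^{-(n-2)}+C\eta^{-(n-2)}$ is monotone increasing in $|\hat x-z_{\hat p,\eta}|$, and on $\Gamma\setminus B_\delta(\hat p)$ one has $|\hat x-z_{\hat p,\eta}|\geq\eta+c(\delta)$ for a fixed $c(\delta)>0$ (the ball $\bar B_\eta(z_{\hat p,\eta})$ touches $\Gamma$ only at $\hat p$, so points of $\Gamma$ at Euclidean distance $\geq\delta$ from $\hat p$ are a definite distance from that ball — here one uses that $\eta\le\eta_0$ is bounded and $\Gamma$ is compact). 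Hence $g\ge C[\eta^{-(n-2)}-(\eta+c(\delta))^{-(n-2)}]\ge C\,\epsilon_0(\delta,\eta_0)>0$ with $\epsilon_0$ a fixed positive constant, while the linear term $\sum_\alpha\varphi_\alpha(0)\hat x_\alpha$ is bounded on $\Gamma$ by $|\nabla\varphi|_{C^0}\,\mathrm{diam}(\Gamma)$ and $\varphi$ is bounded; choosing $C$ large (depending on $\varphi,\Gamma,\delta$, hence on $\varphi,\Gamma$) makes $\hat u>\varphi$ there.

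On the near region, parametrize $\Gamma$ as the graph $\hat x_n=\gamma(\hat x')=\tfrac12\sum\kappa_\beta\hat x_\beta^2+O(|\hat x'|^3)$ and use the Taylor expansion \eqref{varphi-function} of $\varphi$. Then $\varphi(\hat x',\gamma(\hat x'))-\varphi(0)-\sum_\alpha\varphi_\alpha(0)\hat x_\alpha=O(|\hat x'|^2)$, so it suffices to show $g(\hat x',\gamma(\hat x'))\ge c_1|\hat x'|^2$ on $|\hat x'|<\delta$ for a constant $c_1$ that beats that $O(|\hat x'|^2)$. Writing $z_{\hat p,\eta}=(0,\dots,0,\eta)$ in these coordinates (its center lies on the inward normal at $\hat p$), expand $|\hat x-z_{\hat p,\eta}|^2=|\hat x'|^2+(\gamma(\hat x')-\eta)^2=\eta^2-2\eta\gamma(\hat x')+|\hat x'|^2+O(|\hat x'|^4)=\eta^2\big(1-\tfrac{2\gamma-\,|\hat x'|^2/\eta}{\eta}+\dots\big)$; since $\gamma=O(|\hat x'|^2)$, for $\delta$ small one gets $|\hat x-z_{\hat p,\eta}|^{-(n-2)}=\eta^{-(n-2)}\big(1-\tfrac{n-2}{2\eta^2}(|\hat x'|^2-2\eta\gamma)+\dots\big)$, whence $g=C\tfrac{n-2}{2\eta^2}\,|\hat x'|^2\big(1+O(\delta)\big)-C\cdot O(|\hat x'|^2)$ (the $2\eta\gamma$ term is $O(|\hat x'|^2)$ uniformly, absorbed into the error). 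Thus $g\ge C\big(\tfrac{n-2}{4\eta^2}-C'(\Gamma)\big)|\hat x'|^2$ for $\delta$ small, and taking $\eta=\eta(\Gamma)$ small — once $C$ is already fixed — makes the bracket dominate the $O(|\hat x'|^2)$ coming from $\varphi$, yielding $\hat u>\varphi$ on $0<|\hat x'|<\delta$.

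The main obstacle is the bookkeeping of the quantifier order: the constant $C$ in the far region must be chosen before $\eta$, yet the near-region estimate gains a factor $\eta^{-2}$ that ultimately overwhelms any fixed competition — so one must check that shrinking $\eta$ afterward does not spoil the far-region inequality (it does not, since there the relevant quantity $C[\eta^{-(n-2)}-(\eta+c(\delta))^{-(n-2)}]$ only increases as $\eta\downarrow0$, for $c(\delta)$ fixed). Care is also needed to make $c(\delta)$ genuinely independent of the point $\hat p$, which follows from compactness of $\Gamma$ and the uniform choice of $\eta_0$ in the sentence preceding \eqref{c11}.
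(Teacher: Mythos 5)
Your proposal is correct and follows essentially the same route as the paper: the same near/far decomposition of $\Gamma$ relative to $\hat p$, a Taylor expansion of $|\hat x-z_{\hat p,\eta}|^{-(n-2)}$ against the $O(|\hat x'|^2)$ error of $\varphi$ in the near region, and the uniform distance gap $|\hat x-z_{\hat p,\eta}|\geq\eta+c$ in the far region, with the same quantifier bookkeeping ($C$ large, then $\eta$ small). The only discrepancy is a harmless dropped factor of $\eta^{-(n-2)}$ in your near-region leading term (it should read $\tfrac{C(n-2)}{2\eta^{n}}|\hat x'|^2$ rather than $\tfrac{C(n-2)}{4\eta^{2}}|\hat x'|^2$), which only strengthens the bound and does not affect the argument.
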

\begin{proof}
It is clear that $\hat u(0)=\varphi(0),$ we only need to show that for properly chosen constants $C>0$ large and $\eta>0$ small, we have $\hat u>\varphi$ on $\Gamma\setminus\{0\}.$
Note that, by our choice of coordinates, $z_{\hat p, \eta}=(0, \cdots, 0, \eta).$ In this proof, we will always assume $\delta\leq\frac{\eta}{10}$ is a fixed small number. Then on $\Gamma$ when $|\hat x'|<\delta,$
\[
\begin{aligned}
\hat u&=-C(|\hat x'|^2+(\gamma(\hat x')-\eta)^2)^{-\frac{n-2}{2}}+C\eta^{-(n-2)}+\varphi(0)+\sum\limits_{\alpha=1}^{n-1}\varphi_\alpha(0)\hat x_{\alpha}\\
&=C\eta^{-(n-2)}\lt[1-\frac{\eta^{n-2}}{(|\hat x'|^2+\gamma^2-2\eta\gamma+\eta^2)^{\frac{n-2}{2}}}\rt]+\varphi(0)+\sum\limits_{\alpha=1}^{n-1}\varphi_\alpha(0)\hat x_{\alpha}.
\end{aligned}
\]
Since $\gamma(\hat x')=\frac{1}{2}\sum\limits_{\beta=1}^{n-1}\kappa_{\beta}\hat x^2_\beta+O(|\hat x'|^3),$ we can see that when $\eta=\eta(\Gamma)>0$ small, we have
\[|\hat x'|^2+\gamma^2-2\eta\gamma>\la|\hat x'|^2\,\,\mbox{for some $1/2<\la<1.$}\]
This gives
\[\begin{aligned}
\hat u&>C\eta^{-(n-2)}\lt[1-\frac{\eta^{n-2}}{(\la|\hat x'|^2+\eta^2)^{\frac{n-2}{2}}}\rt]+\varphi(0)+\sum\limits_{\alpha=1}^{n-1}\varphi_\alpha(0)\hat x_{\alpha}\\
&=C\eta^{-(n-2)}\lt[1-\frac{1}{\lt(1+\frac{\la|\hat x'|^2}{\eta^2}\rt)^{\frac{n-2}{2}}}\rt]+\varphi(0)+\sum\limits_{\alpha=1}^{n-1}\varphi_\alpha(0)\hat x_{\alpha}\\
&>\frac{C}{(\eta^2+\la|\hat x'|^2)^{\frac{n-2}{2}}}\cdot\frac{n-2}{4}\cdot\frac{\la|\hat x'|^2}{\eta^2}+\varphi(0)+\sum\limits_{\alpha=1}^{n-1}\varphi_\alpha(0)\hat x_{\alpha}.
\end{aligned}
\]
By virtue of \eqref{varphi-function}, it is clear that when $C=C(\varphi, \Gamma)>0$ large and $\eta=\eta(\Gamma)>0$ small, we have $\hat u\geq \varphi$
on $|\hat x'|<\delta.$

Now, we consider the set $\Gamma^c_\delta:=\big\{x\in\mathbb R^n: x\in\Gamma\setminus\{(\hat x', \gamma(\hat x')):|\hat x'|<\delta\}\big\}.$ It is easy to see that when $x\in\Gamma^c_\delta,$ we have $|x-z_{\hat p, \eta}|\geq \eta+\delta_1$ for some small constant $\delta_1=\delta_1(\eta, \Gamma)>0.$ Then
\[
\begin{aligned}
\hat u&\geq C\eta^{-(n-2)}-C(\eta+\delta_1)^{-(n-2)}+\varphi(0)+\sum\limits_{\alpha=1}^{n-1}\varphi_\alpha(0)\hat x_{\alpha}\\
&>\frac{C(n-2)\delta_1}{2\eta^{n-1}}+\varphi(0)+\sum\limits_{\alpha=1}^{n-1}\varphi_\alpha(0)\hat x_{\alpha}.
\end{aligned}
\]
Therefore, when $C=C(\varphi, \Gamma)>0$ large and $\eta=\eta(\Gamma)>0$ small the claim holds.
\end{proof}

\begin{lemma}
\label{lem-c1-b}
Let $u_R$ be the strictly $k$-convex solution of \eqref{dirichlet}, then $u_R$ satisfies
\be\label{c12}|Du_R|<C_\Gamma\,\,\mbox{on $\Gamma,$}\ee
and \be\label{c13} |Du_R|<C_R\,\,\mbox{on $\partial E_R,$}\ee
where $C_\Gamma=C_\Gamma(\Gamma, \varphi)$ is independent of $R$ and $C_R=O(R^{1/2}).$
\end{lemma}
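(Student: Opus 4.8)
I want to prove two gradient bounds for the solution $u_R$ of the Dirichlet problem \eqref{dirichlet}: an interior-boundary estimate \eqref{c12} on $\Gamma$ that is uniform in $R$, and the estimate \eqref{c13} on the outer sphere $\partial E_R$ with the explicit dependence $C_R=O(R^{1/2})$. The global gradient bound on $\bar\Omega_R$ will then follow from the standard maximum principle for the gradient of solutions of Hessian equations (differentiating the equation $\s_k(\la(D^2u_R))=1$ and using concavity), once the boundary values $|Du_R|$ on $\p\Omega_R=\Gamma\cup\p E_R$ are controlled, so the heart of the matter is the boundary gradient estimate, and that splits into the tangential part (which is immediate from the boundary data) and the normal part (which needs barriers). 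The overall strategy mirrors the classical Guan--Spruck/CNS argument: use the subsolution $\lu$ as a \emph{lower} barrier and the functions $\hat u$ (on $\Gamma$), $\bar u_R$ (on $\p E_R$) as \emph{upper} barriers, sandwiching $u_R$ between two functions agreeing with it at the boundary point in question, which pins down the normal derivative.

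\smallskip
First I would prove \eqref{c12}. Fix $\hat p\in\Gamma$. The tangential derivatives of $u_R$ at $\hat p$ are just the tangential derivatives of $\varphi$, hence bounded by $|\varphi|_{C^1(\Gamma)}$. For the normal derivative I use the double barrier: by Lemma \ref{lem-c0} we have $\lu\le u_R$ on $\bar\Omega_R$ with $\lu=\varphi=u_R$ at $\hat p$, so $D_\nu u_R(\hat p)\ge D_\nu\lu(\hat p)\ge -C(\Gamma,\varphi)$ where $\nu$ is the inner normal to $\Omega_R$ (outer to $D$); here $\lu=\lp=\ba^N-1+\varphi$ near $\Gamma$ is smooth with gradient bounded in terms of $N,|\varphi|_{C^1},\Gamma,A$. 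For the bound from the other side I use Lemma \ref{ad-lem1}: after rotating to the coordinates $\{\hat x_i\}$ centered at $\hat p$ with $\hat x_n$ the inner normal to $D$, the function $\hat u$ of \eqref{c11} satisfies $\hat u>\varphi=u_R$ on $\Gamma\setminus\{\hat p\}$ and $\hat u(\hat p)=\varphi(\hat p)$. I must also check $\hat u\ge u_R$ on $\p E_R$ — true once $C$ is large, since $\hat u$ grows like $C\eta^{-(n-2)}$ minus an $O(C R^{-(n-2)})$ term while $u_R=\bar u_R$ on $\p E_R$ is bounded by a quadratic, so after fixing $C$ (possibly increasing it, depending only on $\Gamma,\varphi,A,R_0$ but \emph{not} on $R\ge R_0$ after absorbing) we get $\hat u\ge u_R$ there; alternatively one restricts to the region $E_{2}\setminus\bar D$ and uses $\lu$'s growth to dominate. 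Then $\hat u$ is $k$-convex (a small Hessian computation: $D^2(-C|\hat x-z|^{-(n-2)})$ has eigenvalues $C(n-2)|\hat x-z|^{-n}$ with multiplicity $n-1$ and $-C(n-2)(n-1)|\hat x-z|^{-n}$ once, so $\la\in\Gamma_{n-1}\subset\Gamma_k$; the linear term doesn't affect the Hessian) and $\s_k(\la(D^2\hat u))$ is bounded below — in fact one checks it is $\le 1$, or adjusts $C$, so that $\hat u$ is a supersolution on $\Omega_R$. Hence by the comparison principle $u_R\le\hat u$ on $\bar\Omega_R$, giving $D_\nu u_R(\hat p)\le D_\nu\hat u(\hat p)\le C(\Gamma,\varphi)$. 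Combining tangential and normal bounds yields \eqref{c12} with $C_\Gamma=C_\Gamma(\Gamma,\varphi)$ independent of $R$.

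\smallskip
Next, \eqref{c13} on $\p E_R$. Again the tangential derivatives of $u_R$ on $\p E_R$ equal those of the boundary datum $\bar u_R=\frac12 x^TAx+\mu(\alpha,\beta)+\bar C R^{1-\beta}$, which on $\{s=R\}$ has gradient $|D\bar u_R|=|Ax|=O(R^{1/2})$ since $|x|=O(R^{1/2})$ on $E_R$. For the normal derivative, $\bar u_R$ itself is the natural upper barrier: $\s_k(\la(D^2\bar u_R))=1$, $\bar u_R\ge u_R$ on $\p\Omega_R$ (we chose $\bar C$ so that $\bar u_R>\lu$ on $\p E_R$, and $\bar u_R>\varphi$ on $\Gamma$ by condition (b), while $u_R=\varphi$ there), so $u_R\le\bar u_R$ on $\bar\Omega_R$ with equality on $\p E_R$, whence $D_\nu u_R\le D_\nu\bar u_R=O(R^{1/2})$ for $\nu$ the outer normal to $\Omega_R$. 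From the other side $\lu\le u_R$ with equality on $\p E_R$ gives $D_\nu u_R\ge D_\nu\lu$; on $\p E_R$, $\lu=\lp^1=\oab+s^{-\La}\tvarphi$ and by \eqref{sub8} the function $\oab$ is $\frac12 x^TAx+O(R^{1-\beta})$ with gradient $Ax+O(R^{-\beta+1/2})=O(R^{1/2})$. Thus $|Du_R|=O(R^{1/2})$ on $\p E_R$, i.e. \eqref{c13}.

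\smallskip
\textbf{The main obstacle.} The genuinely delicate point is the upper barrier $\hat u$ on $\Gamma$: one must simultaneously guarantee (i) $\hat u\ge\varphi$ on all of $\Gamma$ with equality only at $\hat p$ — this is exactly Lemma \ref{ad-lem1}, where the strict $(k-1)$-convexity of $D$ is used to get $|\hat x'|^2+\gamma^2-2\eta\gamma>\la|\hat x'|^2$; (ii) $\hat u$ is $k$-admissible (its Hessian eigenvalue vector lies in $\Gamma_k$); and (iii) $\hat u$ is a supersolution, $\s_k(\la(D^2\hat u))\le1$ on $\Omega_R$, so that the comparison principle for $\s_k$ applies — and (ii),(iii) compete against (i) in the size of $C$ and the smallness of $\eta$, plus one must confirm $\hat u$ dominates $u_R$ on the far boundary $\p E_R$ uniformly in $R$. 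Reconciling these constant choices — large $C$, small $\eta$, $R$-independence — is where the care goes; everything else (the tangential bounds, the use of Lemmas \ref{lem-c0}–\ref{ad-lem1}, and the eventual global gradient estimate via the maximum principle) is routine.
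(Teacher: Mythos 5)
Your overall strategy (sandwich $u_R$ between the subsolution $\lu$ from below and the barriers $\hat u$, $\bar u_R$ from above, then read off the normal derivative at a contact point) is the same as the paper's, but two of your key steps contain genuine errors.

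First, the upper barrier on $\Gamma$. Your eigenvalue computation for $D^2\bigl(-C|\hat x-z|^{-(n-2)}\bigr)$ is correct, but the conclusion "$\la\in\Gamma_{n-1}\subset\Gamma_k$" is false: with eigenvalues $\mu$ ($n-1$ times) and $-(n-1)\mu$, one has $\s_1=0$ and $\s_2=-\frac{n(n-1)}{2}\mu^2<0$, so $\la(D^2\hat u)$ lies in no $\Gamma_j$ with $j\geq1$; $\hat u$ is harmonic, not $k$-admissible, and the $\s_k$-comparison with an admissible supersolution that you invoke does not apply. The paper's comparison is through the Laplacian: $\s_1(\la(D^2\hat u))=0<\s_1(\la(D^2u_R))$ since $u_R$ is strictly $k$-convex with $k\geq2$, so $u_R-\hat u$ is subharmonic and the ordinary maximum principle gives $u_R\leq\hat u$. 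Relatedly, your check that $\hat u\geq u_R$ on $\p E_R$ "once $C$ is large" is backwards: on $\p E_R$ one has $u_R=\bar u_R\approx R$, while $\hat u=C\eta^{-(n-2)}+O(R^{1/2})$, so for large $R$ the inequality fails unless $C$ grows with $R$, which would destroy the $R$-independence of $C_\Gamma$. Your parenthetical alternative is the correct (and the paper's) move: run the comparison only on $E_1\setminus\bar D$, where $\hat u>\bar u_R>u_R$ on $\p E_1$ can be arranged with $C$ independent of $R$ because $\bar u_R$ restricted to $\p E_1$ is bounded uniformly in $R$.

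Second, the lower bound for $D_\nu u_R$ on $\p E_R$. You write "$\lu\leq u_R$ with equality on $\p E_R$", but there is no equality there: $u_R=\bar u_R$ on $\p E_R$, and $\bar C$ was chosen precisely so that $\bar u_R>\lu$ on $\p E_R$ strictly. Without contact at the boundary point, the normal-derivative comparison gives nothing. The paper repairs this by introducing a second lower barrier $\lu_R=\la x^TAx-(2\la R-R-\mu(\alpha,\beta)-\bar CR^{1-\beta})$ with $\la>1/2$: it is a strict subsolution (constant Hessian, $\s_k=(2\la)^k>1$), coincides with $\bar u_R$, hence with $u_R$, exactly on $\p E_R$, and lies below $\varphi$ on $\Gamma$ for suitable $\la$, so the maximum principle gives $\lu_R<u_R<\bar u_R$ in $\Omega_R$ and then $D_\nu\lu_R\leq D_\nu u_R\leq D_\nu\bar u_R=O(R^{1/2})$ on $\p E_R$. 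The remaining parts of your outline (the tangential bounds, the use of $\lu$ as lower barrier on $\Gamma$, and $\bar u_R$ as upper barrier on $\p E_R$) agree with the paper.
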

\begin{proof}
By Lemma \ref{lem-c0} we have $\lu<u_R$ in $\Omega_R$ and $u_R=\lu$ on $\Gamma.$
Therefore, we get $D_\nu u_R>D_\nu\lu$ on $\Gamma,$ where $\nu$ is the inward normal of $\Gamma,$ that is, pointing into $\Omega_R.$
Now, for any $\hat p\in\Gamma,$ we consider $\hat u$ that is given in \eqref{c11}. It is easy to verify that
\[\s_1(\la(D^2\hat u))=0<\s_1(\la(D^2 u_R)).\]
Futhermore, we can always choose $C=C(\varphi, \Gamma, A)>0$ large and $\eta=\eta(\Gamma, \varphi, A)>0$ small such that
\[\hat u>\bar u_R>u_R\,\,\mbox{on $\partial E_1.$}\]
In view of Lemma \ref{ad-lem1} and the maximum principle we obtain $\hat u\geq u_R$ in $\overline{E_1\setminus D}.$ Moreover, at $\hat p$ we have
$\hat u=u_R.$ Thus $D_\nu\hat u>D_\nu u_R$ at $\hat p.$ Since $\hat p\in\Gamma$ is arbitrary we conclude \eqref{c12}.

On $E_R,$ let $\lu_R=\la x^TAx-(2\la R-R-\mu(\alpha, \beta)-\bar CR^{1-\beta}),$
where $\la=\la(\mu(\alpha, \beta), R_0, \varphi)>1/2$ such that for any $R>R_0,$ $\lu_R<\varphi$ on $\Gamma.$ Applying the maximum principle we get
\[\lu_R<u_R<\bar u_R\,\,\mbox{in $E_R\setminus\bar D$}.\]
Moreover, on $\partial E_R$ it is clear that $\lu_R=u_R=\bar u_R.$ Therefore, we have
\[D_\nu\lu_R<D_\nu u_R<D_\nu \bar u_R,\]
where $\nu$ is the inward unit normal to $\partial E_R,$ that is, pointing into $\Omega_R.$
This proves \eqref{c13}.
\end{proof}

\begin{lemma}
\label{lem-c1-g}
Let $u_R$ be the strictly $k$-convex solution of \eqref{dirichlet}, then $u_R$ satisfies
\be\label{c14}
\max\limits_{x\in\bar\Omega_R}|Du_R|=\max\limits_{x\in\partial\Omega_R}|Du_R|.
\ee
\end{lemma}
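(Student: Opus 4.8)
The plan is to differentiate the equation once and apply the maximum principle to the quantity $w := \abs{Du_R}^2 = \sum_\ell (u_\ell)^2$ on the domain $\Omega_R$, where subscripts denote partial derivatives in Euclidean coordinates. Write the equation as $F(D^2 u_R) := \s_k(\la(D^2 u_R)) = 1$ and let $F^{ij} := \frac{\partial \s_k}{\partial r_{ij}}(D^2 u_R)$. Since $u_R$ is strictly $k$-convex, $\la(D^2 u_R)\in\Gamma_k$, so $(F^{ij})$ is a positive definite matrix on all of $\Omega_R$; thus the linearized operator $L := F^{ij}\p_{ij}$ is (locally uniformly) elliptic. This is the structural fact that makes the argument go through.

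The key computation is the standard one: differentiating $F(D^2 u_R)=1$ in the direction $x_\ell$ gives $F^{ij}(u_R)_{ij\ell} = 0$ for each $\ell$. Then
\[
L w = F^{ij}\p_{ij}\Big(\sum_\ell (u_\ell)^2\Big)
= 2\sum_\ell F^{ij} (u_\ell)_i (u_\ell)_j + 2\sum_\ell u_\ell \, F^{ij}(u_R)_{ij\ell}
= 2\sum_\ell F^{ij} (u_\ell)_i (u_\ell)_j \ \ge\ 0,
\]
the last inequality because $(F^{ij})\ge 0$. Hence $w$ is a subsolution of the elliptic operator $L$ on $\Omega_R$, and by the classical maximum principle $\max_{\bar\Omega_R} w = \max_{\p\Omega_R} w$, which is exactly \eqref{c14} after taking square roots. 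Here I use that $u_R\in C^\infty(\overline{E_R\setminus\bar D})$ — which is part of the setup in which \eqref{dirichlet} is being solved — so that $w\in C^2(\Omega_R)\cap C^0(\bar\Omega_R)$ and the differentiated identity is legitimate; alternatively one differentiates on compact subdomains and lets them exhaust $\Omega_R$, the ellipticity being uniform on each.

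I do not expect a genuine obstacle here: the only thing to be careful about is ensuring the ellipticity is honest, i.e. that $\la(D^2 u_R)$ stays in $\Gamma_k$ so that $(F^{ij})>0$ — and this is built into the statement, since $u_R$ is assumed to be the \emph{strictly} $k$-convex solution of \eqref{dirichlet}. One should also note that the $C^1$ norm on $\p\Omega_R = \Gamma\cup\p E_R$ was already controlled in Lemma \ref{lem-c1-b}, so \eqref{c14} combined with that lemma will yield the global bound $\abs{Du_R}\le \max\{C_\Gamma, C_R\}$ with $C_R = O(R^{1/2})$; but the present lemma is purely the reduction-to-the-boundary step and needs nothing beyond the differentiated Bellman-type inequality above.
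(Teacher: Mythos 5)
Your proposal is correct and is essentially identical to the paper's proof: differentiate $\s_k(\la(D^2u_R))=1$ to get $\s_k^{ij}(u_R)_{ijl}=0$, deduce $\s_k^{ij}\p_i\p_j|Du_R|^2=2\s_k^{ij}(u_R)_{li}(u_R)_{lj}\geq 0$, and apply the maximum principle. The extra remarks on ellipticity of $(\s_k^{ij})$ for $\la\in\Gamma_k$ are standard and consistent with the paper's (implicit) use of the same fact.
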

\begin{proof} Differentiating \eqref{dirichlet} with respect to $x_l$ gives
\[\s_k^{ij}u_{ijl}=0.\]
Therefore, we have
\[\s_k^{ij}(|Du_R|^2)_{ij}=2\s_k^{ij}u_lu_{lij}+2\s_k^{ij}u_{li}u_{lj}=2\s_k^{ij}u_{li}u_{lj}\geq 0.\]
By virtue of the maximum principle this lemma is proved.
\end{proof}

\subsection{$C^2$ estimates} In this subsection, we will establish the $C^2$ estimates of $u_R.$ The techniques are used here are the same as \cite{CNS3}. For readers' convenience,
we include the argument here.

\subsubsection{$C^2$ boundary estimates on $\Gamma$} Let $x_0\in\Gamma$ be an arbitrary point. Without loss of generality, we may choose local coordinates
$\{\tx_1, \cdots, \tx_n\}$  in the neighborhood of $x_0$ such that $x_0$ is the origin and $\tx_n$
axis is the inward normal of $\Gamma$ (pointing into $\Omega_R$) at $x_0.$ Then the boundary near $x_0$ can be expressed as
\[\tx_n=\gamma(\tx')=-\frac{1}{2}\sum\limits_{\alpha=1}^{n-1}\kappa_\alpha\tx_\alpha^2+O(|\tx'|^3),\]
where $\kappa_1, \cdots, \kappa_{n-1}$ are the principal curvatures of $\Gamma$ at $x_0$ and $\tx'=(\tx_1, \cdots, \tx_{n-1}).$
Following \cite{CNS3}, we may assume $\varphi$ has been extended smoothly to $\bar\Omega_R$ with $\varphi(0)=0.$ Then we get
\[u_{\ta\tb}(0)=\varphi_{\ta\tb}(0)-u_{\tn}(0)\kappa_\alpha\delta_{\ta\tb}\,\,\mbox{for $\alpha, \beta<n$.}\]
This gives
\be\label{c2.1}
|u_{\ta\tb}(0)|\leq C, \,\,\alpha, \beta<n
\ee
for some $C=C(\varphi, \Gamma, |Du|_{C^0(\Gamma)})>0$ that is independent of $R.$

Next, we estimate $|u_{\ta\tn}(0)|$ for $\alpha<n.$
\begin{lemma}
\label{lem-c2-ib-1}
Let $u_R$ be the strictly $k$-convex solution of \eqref{dirichlet}, then $u_R$ satisfies
\be\label{c2.2}|(u_R)_{\tau\nu}|<C\,\,\mbox{on $\Gamma,$}\ee
where $\tau$ is an arbitrary unit tangent vector of $\Gamma,$ $\nu$ is the inward unit normal of $\Gamma,$ and $C=C(\Gamma, \varphi, |Du|_{C^0(\Gamma)})>0$ is independent of $R.$
\end{lemma}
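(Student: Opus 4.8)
The statement to be proved is the mixed tangential–normal second derivative bound $|(u_R)_{\tau\nu}|<C$ on $\Gamma$, with $C$ independent of $R$. The approach follows the classical Caffarelli–Nirenberg–Spruck barrier method (as the text already announces). Fix $x_0\in\Gamma$ and the boundary coordinates $\{\tx_1,\dots,\tx_n\}$ already set up above. It suffices to estimate the tangential operator $T=\partial_\alpha+\sum_{\beta<n}\kappa_\beta\tx_\beta\partial_n$ (or an analogous first-order tangential operator adapted to the graph $\tx_n=\gamma(\tx')$) applied to $u_R-\varphi$ at $x_0$; differentiating the boundary condition $u_R=\varphi$ along $\Gamma$ shows $T(u_R-\varphi)$ vanishes to the appropriate order on $\Gamma$ near $x_0$, so the goal is to bound $|\partial_n T(u_R-\varphi)|$ at $x_0$, which controls $|u_{\tilde\alpha\tilde n}(0)|$ once \eqref{c2.1} and the $C^1$ bound \eqref{c12} are in hand.

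The key steps, in order, are as follows. First, compute $\s_k^{ij}\partial_{ij}\big(T(u_R-\varphi)\big)$: since $\s_k^{ij}u_{ijl}=0$ (as in Lemma \ref{lem-c1-g}), the commutator of $T$ with the equation produces only terms involving $D^2u_R$ contracted with $\s_k^{ij}$ and lower-order data, all controlled using the $C^0$ and $C^1$ estimates of Lemmas \ref{lem-c0} and \ref{lem-c1-b}–\ref{lem-c1-g}, together with $\sum_i\s_k^{ii}\ge c(n,k)[\s_k^{ij}\text{-trace lower bound}]$ type inequalities valid because $\la(D^2u_R)\in\Gamma_k$. Second, build a barrier of the form
\[
\Theta = N_0\big(|\tx|^2 - \mu\,\tx_n\big) + A^\sharp\,\tx_n - \text{(the $\hat u$-type function)},
\]
more precisely combine the radial function $-C|\tx-z_{\hat p,\eta}|^{-(n-2)}$ from \eqref{c11} (which is $\s_1$-harmonic, hence a supersolution for the linearized operator after suitable scaling) with a large multiple of $|\tx'|^2$ and a controlled multiple of $\tx_n$, following the CNS construction; choose the constants $N_0$, $A^\sharp$ large (depending only on $\Gamma$, $\varphi$, the $C^1$ bound, and ellipticity, \emph{not} on $R$) so that $\s_k^{ij}\partial_{ij}\Theta\le -\,(\text{positive})$ on $\Omega_R\cap B_\delta(x_0)$ and $\Theta\ge \pm T(u_R-\varphi)$ on $\partial(\Omega_R\cap B_\delta(x_0))$. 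On the spherical part $\partial B_\delta(x_0)\cap\Omega_R$ one uses the global gradient bound of Lemma \ref{lem-c1-g} and takes $N_0$ large; on $\Gamma\cap B_\delta(x_0)$ one uses that $T(u_R-\varphi)$ vanishes to second order. Third, apply the maximum principle to $\Theta\mp T(u_R-\varphi)$ on $\Omega_R\cap B_\delta(x_0)$ and read off $|\partial_n T(u_R-\varphi)(x_0)|\le \partial_n\Theta(x_0)\le C$. Combining with \eqref{c2.1} yields \eqref{c2.2}.

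The main obstacle is the barrier construction: one must verify that $\s_k^{ij}\partial_{ij}\Theta$ is bounded above by a strictly negative constant even though the $\s_k^{ij}$ themselves are not yet known to be bounded (we have no $C^2$ bound yet, so some eigenvalues of $D^2u_R$ may be large). This is handled exactly as in \cite{CNS3}: the concavity of $\s_k^{1/k}$ gives $\s_k^{ij}u_{ij}=k\s_k(\la(D^2u_R))^{?}$-type identities and, crucially, $\sum_i\s_k^{ii}\ge c_0>0$ uniformly (a consequence of $\la(D^2u_R)\in\Gamma_k$ and $\s_k=1$), which makes the negative-definite Hessian contribution of $-N_0\,|\tx'|^2$-type terms dominate. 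The radial term $|\tx-z_{\hat p,\eta}|^{-(n-2)}$ contributes a favorable (nonpositive after the right sign choice) term for the linearized operator since its Hessian has one large negative and $n-1$ positive eigenvalues, and the $\s_k^{ij}$-contraction of it is controlled by $\sum\s_k^{ii}$ times bounded quantities plus a definite-sign leading piece near $x_0$; choosing $\eta$ small (depending only on $\Gamma$) as in Lemma \ref{ad-lem1} and then $C$, $N_0$ large in the right order closes the argument. All constants produced this way depend only on $\Gamma$, $\varphi$, and $|Du_R|_{C^0(\Gamma)}$, the last of which is $R$-independent by Lemma \ref{lem-c1-b}, so the final bound is uniform in $R$ as required.
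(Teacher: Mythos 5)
Your setup (the tangential operator $T$, the vanishing of $T(u_R-\varphi)$ to second order on $\Gamma$, and the bound $|L(T(u_R-\varphi))|\le C_0\sum_i\s_k^{ii}$) matches the paper. The gap is in the barrier. The error term you must absorb is $C_0\sum_i\s_k^{ii}$, and since no second-derivative bound is available yet, $\sum_i\s_k^{ii}$ has no a priori \emph{upper} bound; your observation that $\sum_i\s_k^{ii}\ge c_0>0$ points in the wrong direction. To absorb $C_0\sum_i\s_k^{ii}$ you need a barrier $\Theta$ with $L\Theta\le -c\sum_i\s_k^{ii}$ for a controlled $c>0$, together with $\Theta\ge |T(u_R-\varphi)|$ on $\partial(B_\e\cap\Omega_R)$. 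Your proposed $\Theta$ cannot deliver the first property: the term $N_0|\tx|^2$ gives $L(N_0|\tx|^2)=+2N_0\sum_i\s_k^{ii}$, i.e.\ a large contribution of the \emph{wrong} sign; the linear terms contribute nothing; and the radial piece $\pm C|\tx-z_{\hat p,\eta}|^{-(n-2)}$ has Hessian proportional to $n\,\hat e\otimes\hat e-I$, whose contraction with $\s_k^{ij}$ is of size $O(\sum_i\s_k^{ii})$ but of \emph{indefinite} sign (harmonicity only controls the contraction with $\delta_{ij}$, not with a general positive matrix $\s_k^{ij}$). So the claimed inequality $\s_k^{ij}\partial_{ij}\Theta\le-(\text{positive})$ does not follow, and the maximum-principle step collapses.

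The missing idea — and the reason Section 2 of the paper constructs a \emph{strict} subsolution — is to build the barrier from $h=u_R-\lp+\tfrac{\delta_1}{2}|\tx|^2$. Because $\s_k(\la(D^2\lp))>1+\delta_0$ near $x_0$, one still has $\s_k(\la(D^2(\lp-\delta_1|\tx|^2)))>1$ for small $\delta_1$, and concavity of $\s_k^{1/k}$ then yields $L(\lp-\delta_1|\tx|^2)>k=Lu_R$, hence $Lh<-\delta_1\sum_i\s_k^{ii}$: this is the only mechanism that produces the needed $-c\sum_i\s_k^{ii}$. Simultaneously, $u_R>\lu=\lp$ in $E_1\setminus\bar D$ (Lemma \ref{lem-c0}) gives $h\ge\tfrac{\delta_1}{2}|\tx|^2$, which supplies the boundary positivity dominating $|T(u_R-\varphi)|=O(|\tx'|^2)$ on $\Gamma\cap B_\e$ and $O(1)$ on $\partial B_\e\cap\Omega_R$. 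Then $\tilde W=Bh\pm T(u_R-\varphi)$ with $B$ large closes the argument. Your $\hat u$-type radial function is the right tool for the $C^1$ boundary estimate (Lemma \ref{lem-c1-b}), where only $\s_1$ of the barrier matters, but it cannot replace the strict subsolution in the second-derivative barrier.
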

\begin{proof}
Notice that the boundary $\Gamma$ near $x_0$ can be expressed as
\[\tx_n=\gamma(\tx')=-\frac{1}{2}\sum\limits_{\alpha=1}^{n-1}\kappa_\alpha\tx^2_\alpha+O(|\tx'|^3).\]
Denote $W_R=u_R-\varphi$ and let
\[T:=\partial_{\ta}-\kappa_\alpha(\tx_\alpha\partial_{\tn}-\tx_n\partial_{\ta}),\]
it is clear that on $\Gamma$ near $x_0$ we have
\be\label{c2.3}
TW_R=\lt(\partial_{\ta}+\gamma_{\ta}\partial_{\tn}\rt)W_R+O(|\tx'|^2)=O(|\tx'|^2).
\ee
Set $L:=\frac{\partial\s_k(\la(D^2u))}{\partial u_{ij}}\partial_i\partial_j,$ then we get
\be\label{c2.4}
|L(TW_R)|\leq C_0\sum\s_k^{ii}
\ee
for some $C_0=C_0(\varphi, \Gamma, |Du|_{C^0(\Gamma)})>0.$ In view of Proposition \ref{prosub1}, we know that $\lambda(D^2\lp)\in\Gamma_k$ in $E_1\setminus\bar D.$
Moreover, we also have
\[\s_k(\la(D^2\lp))>1+\delta_0\,\,\mbox{in $B_\e(x_0)\cap\Omega_R,$}\]
where $\e>0$ is a fixed small number and $\delta_0=\delta_0(N)>0$ for some $N$ chosen in Proposition \ref{prosub1}. Therefore, there exists
$\delta_1=\delta_1(\lp, \delta_0)>0$ satisfying
\[\s_k(\la(D^2\lp-\delta_1|\tx|^2))>1\,\,\mbox{in $B_\e(x_0)\cap\Omega_R.$}\]
By the concavity of $\s_k^{1/k}$ we know
\[L(\lp-\delta_1|\tx|^2)>k\,\,\mbox{in $B_\e(x_0)\cap\Omega_R,$}\]
which implies
\[L(u_R-\lp+\delta_1|\tx|^2)<0\,\,\mbox{in $B_\e(x_0)\cap\Omega_R.$}\]
We will denote $h=u_R-\lp+\frac{\delta_1}{2}|\tx|^2,$ then we have $Lh<-\delta_1\sum\limits_i\s_k^{ii}.$
Set $\tilde W=Bh\pm TW_R,$ we may choose $B>0$ large such that $\tilde W\geq0$ on $\partial(B_\e\cap\Omega_R)$ and $L\tilde W<0$
in $B_\e\cap\Omega_R.$ By the maximum principle we conclude $\tilde W>0$ in $B_\e\cap\Omega_R.$ Thus,
at $x_0$ we have $\tilde W_{\tn}>0,$ which gives
\[|(u_R)_{\ta\tn}|<C\]
for some $C=C(\Gamma, \varphi, |Du|_{C^0(\Gamma)})>0$ that is independent of $R.$
\end{proof}

Finally, following the well-known argument of \cite{CNS3} on page 284 and 285 we obtain
\begin{lemma}
\label{lem-c2-ib-2}
Let $u_R$ be the strictly $k$-convex solution of \eqref{dirichlet}, then $u_R$ satisfies
\be\label{c2.5}|(u_R)_{\nu\nu}|<C\,\,\mbox{on $\Gamma,$}\ee
where $\nu$ is the inward unit normal of $\Gamma,$ and $C=C(\Gamma, \varphi, |Du|_{C^0(\Gamma)})>0$ is a constant that is independent of $R.$
\end{lemma}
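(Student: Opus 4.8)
The plan is to follow the classical Caffarelli--Nirenberg--Spruck scheme for estimating the double-normal second derivative on the boundary, now that the tangential-tangential bound \eqref{c2.1} and the tangential-normal bound \eqref{c2.2} are in hand. Work at a fixed point $x_0\in\Gamma$, taken to be the origin, with $\tx_n$ the inward normal and the boundary written as $\tx_n=\gamma(\tx')=-\frac12\sum_{\alpha<n}\kappa_\alpha\tx_\alpha^2+O(|\tx'|^3)$; assume $\varphi$ has been extended smoothly with $\varphi(0)=0$ as before. At the origin the matrix $D^2u_R(0)$ has the block form in which the upper-left $(n-1)\times(n-1)$ block and the last row/column (except the corner entry $u_{\tn\tn}(0)$) are already controlled by a constant $C=C(\Gamma,\varphi,|Du|_{C^0(\Gamma)})$ independent of $R$. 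So it suffices to produce an upper bound $u_{\tn\tn}(0)\le C$; the lower bound, i.e. that $\lambda(D^2u_R(0))$ stays away from $\partial\Gamma_k$, follows once one knows $u_{\tn\tn}(0)$ is bounded above, because $\sigma_k(\lambda(D^2u_R))=1$ together with the already-established bounds on all other entries forces $\sigma_k$ to degenerate if $u_{\tn\tn}(0)$ were too small, contradicting the equation (this is exactly the argument on pp.\ 284--285 of \cite{CNS3}).

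First I would reduce the upper bound to a statement about the restricted eigenvalue problem. Following \cite{CNS3}, let $\lambda'=(\lambda_1,\dots,\lambda_{n-1})$ denote the eigenvalues of the tangential Hessian $(u_{\ta\tb}(0))_{\alpha,\beta<n}$ and consider the function
\[
g(\tx')=\sigma_k\big(\lambda'(\tx')\big)\big/\,\sigma_{k-1}\big(\lambda'(\tx')\big)
\]
along the boundary near the origin (or the equivalent quantity built from the full Hessian restricted to the tangent space). The key point is that $u_{\tn\tn}(0)$ is bounded above precisely when this ratio is bounded below by a positive constant at the origin, since by the equation $\sigma_k(\lambda(D^2u_R(0)))=1$ one has, up to the controlled error terms coming from \eqref{c2.1}--\eqref{c2.2},
\[
1=\sigma_{k-1}(\lambda')\,u_{\tn\tn}(0)+\sigma_k(\lambda')+(\text{bounded}),
\]
so an upper bound on $u_{\tn\tn}(0)$ is equivalent to a lower bound $\sigma_{k-1}(\lambda'(0))\ge c_0>0$. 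Thus the whole estimate comes down to proving $\sigma_{k-1}(\lambda'(0))\ge c_0$.

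To get that lower bound one uses a barrier/maximum-principle argument, again as in \cite{CNS3}: on $B_\e(x_0)\cap\Omega_R$ consider a test function of the form
\[
\Theta=\sigma_{k-1}\big(\lambda'|_{\text{boundary data}}\big)-A\,h-\sum_{l<n}B_l\,\partial_l W_R+C\,|\tx|^2,
\]
where $h=u_R-\lp+\tfrac{\delta_1}{2}|\tx|^2$ is the auxiliary function already built in the proof of Lemma \ref{lem-c2-ib-1} (recall $Lh<-\delta_1\sum_i\sigma_k^{ii}$), $W_R=u_R-\varphi$, and the constants $A, B_l, C$ are chosen large depending only on $\Gamma,\varphi,|Du|_{C^0(\Gamma)}$; the strict $(k-1)$-convexity of $\Gamma$ is what guarantees that the boundary datum $\sigma_{k-1}(\lambda')$ is bounded below on $\Gamma$, which feeds into the choice of these constants. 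One checks $L\Theta\le 0$ in $B_\e\cap\Omega_R$ and $\Theta\ge 0$ on the parabolic-type boundary $\partial(B_\e\cap\Omega_R)$ using \eqref{c2.3}, \eqref{c2.4}, and the bound $|\partial_l W_R|\le C$ from the $C^1$ estimates; the maximum principle then gives $\Theta\ge 0$ in $B_\e\cap\Omega_R$, and evaluating the normal derivative at $x_0$ yields the desired lower bound on $\sigma_{k-1}(\lambda'(0))$, hence $u_{\tn\tn}(0)\le C$. Since all constants depend only on $\Gamma$, $\varphi$, and $|Du|_{C^0(\Gamma)}$ (the latter being $R$-independent by Lemma \ref{lem-c1-b}), the bound \eqref{c2.5} is uniform in $R$.

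I expect the main obstacle to be purely bookkeeping rather than conceptual: one must keep careful track of which error terms are genuinely $O(|\tx'|^2)$ versus merely bounded, and verify that the operator $L=\sigma_k^{ij}\partial_i\partial_j$ applied to $\sigma_{k-1}$ of the tangential Hessian produces a manageable sign — this is where the concavity of $\sigma_k^{1/k}$ on $\Gamma_k$ and the Newton inequalities among the $\sigma_m$ are used, exactly as in \cite{CNS3}. The only place where one must be slightly careful, compared to the standard convex-domain setting, is that $D$ here is only strictly $(k-1)$-convex, so one cannot assume the tangential Hessian of the boundary defining function lies in $\Gamma_k$; but since the construction only ever needs $\sigma_{k-1}$ of the relevant tangential matrices to be positive, strict $(k-1)$-convexity of $\Gamma$ is exactly the hypothesis that makes the barrier argument go through, and no more is needed.
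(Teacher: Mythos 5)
Your overall architecture is the right one and matches what the paper is invoking when it cites pages 284--285 of \cite{CNS3}: bound $u_{\tn\tn}(0)$ from above by applying the eigenvalue-perturbation Lemma 1.2 of \cite{CNS3} to the block form of $D^2u_R(0)$, which reduces everything to a uniform positive lower bound for $\s_{k-1}$ of the eigenvalues $\la'$ of the tangential Hessian $(u_{\ta\tb}(0))$. Up to that point your proposal is sound.

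The gap is in how you then try to prove $\s_{k-1}(\la'(0))\geq c_0$. The quantity $\s_{k-1}(\la'(0))$ is an algebraic function of the tangential second derivatives $u_{\ta\tb}(0)$, which are already completely determined on $\Gamma$ by the boundary identity recorded just before \eqref{c2.1}; it is not the normal derivative of anything. A barrier of the form $\Theta=\s_{k-1}(\cdots)-Ah-\sum B_l\p_l W_R+C|\tx|^2$ with $L\Theta\le 0$ and $\Theta\ge 0$ on $\p(B_\e\cap\Omega_R)$ can only yield, via Hopf at $x_0$, a sign on a \emph{first} derivative of the comparison quantity at $x_0$ --- that is exactly the mechanism that produced the mixed bound $|u_{\ta\tn}(0)|\le C$ in Lemma \ref{lem-c2-ib-1} --- and it cannot output a lower bound on a zeroth-order function of $(u_{\ta\tb}(0))$. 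As written, ``evaluating the normal derivative at $x_0$ yields the desired lower bound on $\s_{k-1}(\la'(0))$'' does not follow. The correct (and much more elementary) mechanism, which is precisely where the hypotheses of this paper enter, is: from $u_R=\varphi$ on $\Gamma$ one has $u_{\ta\tb}(0)=\varphi_{\ta\tb}(0)+(u_R-\varphi)_{\tn}(0)\,\kappa_\alpha\delta_{\ta\tb}$; by Lemma \ref{lem-c0}, $u_R\geq\lp$ with equality on $\Gamma$, so $(u_R-\varphi)_{\tn}\geq(\lp-\varphi)_{\tn}=N\ba^{N-1}\ba_{\tn}\geq cN>0$ on $\Gamma$; and $(\kappa_1,\dots,\kappa_{n-1})\in\Gamma_{k-1}$ by the strict $(k-1)$-convexity of $\Gamma$. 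Hence $(u_{\ta\tb}(0))$ is a bounded perturbation of a large positive multiple of $\mathrm{diag}(\kappa_1,\dots,\kappa_{n-1})$, which forces $\la'\in\Gamma_{k-1}$ and $\s_{k-1}(\la')\geq c_0>0$ uniformly, with $c_0$ depending only on $\Gamma$, $\varphi$, and the fixed $N$ from Proposition \ref{prosub1}. No interior maximum principle is needed for this step. Your closing remark that only $(k-1)$-convexity (not $k$-convexity) is required is correct, but the reason is the identity above, not the barrier you describe.
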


\subsubsection{$C^2$ boundary estimates on $\partial E_R$} Now we let
$\tilde{u}_R=\frac{1}{R}u_R(\sqrt{R}x),$ then $\tu_R$ satisfies
\be\label{c2.6}
\left\{
\begin{aligned}
\s_k(\la(D^2\tu))&=1\,\,&\mbox{in $\frac{1}{\sqrt{R}}\Omega_R,$}\\
\tu&=\frac{\varphi}{R}\,\,&\mbox{on $\partial\frac{D}{\sqrt R},$}\\
\tu&=1+\frac{\mu(\alpha, \beta)}{R}+\bar CR^{-\beta}\,\,&\mbox{on $\partial E_1.$}
\end{aligned}
\right.
\ee
By Section 5 of \cite{CNS3} we have on $\partial E_1$
\[|D^2\tu_R|\leq C\]
for some constant $C=C(A)>0$ that is independent of $R.$ This in turn implies that
\[|D^2u_R|\leq C\,\,\mbox{on $\partial E_R.$}\] We conclude
\begin{lemma}
\label{lem-c2-ob}
Let $u_R$ be the strictly $k$-convex solution of \eqref{dirichlet}, then $u_R$ satisfies
\be\label{c2.7}|D^2u_R|<C\,\,\mbox{on $\p\Omega_R,$}\ee
for some constant $C=C(\Gamma, \varphi, |Du|_{C^0(\Gamma)}, A)>0$ that is independent of $R.$
\end{lemma}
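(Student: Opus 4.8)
The plan is to bound $|D^2 u_R|$ separately on the two components of $\p\Omega_R$ — the fixed inner boundary $\Gamma$ and the large outer ellipsoid $\p E_R$ — and then take the larger of the two $R$-independent constants.

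On $\Gamma$ the full Hessian bound is assembled from the three ingredients established above. Fixing an arbitrary $x_0\in\Gamma$ and working in the adapted coordinates of the paragraph preceding Lemma \ref{lem-c2-ib-1}, the purely tangential second derivatives $u_{\ta\tb}(0)$, $\alpha,\beta<n$, are controlled by \eqref{c2.1}; the mixed tangential-normal second derivatives by Lemma \ref{lem-c2-ib-1}; and the double-normal derivative $(u_R)_{\nu\nu}(0)$ by Lemma \ref{lem-c2-ib-2}. Each of the corresponding constants depends only on $\Gamma$, $\varphi$ and $|Du_R|_{C^0(\Gamma)}$, and the last quantity is $\le C_\Gamma(\Gamma,\varphi)$ independently of $R$ by \eqref{c12}; since $x_0$ is arbitrary, $|D^2 u_R|\le C(\Gamma,\varphi)$ on $\Gamma$ with $C$ independent of $R$.

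On $\p E_R$ I would use the parabolic rescaling $\tilde u_R(x)=\frac{1}{R}u_R(\sqrt{R}\,x)$ introduced above, which solves \eqref{c2.6} on $\frac{1}{\sqrt{R}}\Omega_R=E_1\setminus\frac{1}{\sqrt{R}}D$ with the constant boundary value $1+\frac{\mu(\alpha,\beta)}{R}+\bar C R^{-\beta}$ on $\p E_1$, and which satisfies $D^2\tilde u_R(x)=D^2 u_R(\sqrt{R}\,x)$ and $D\tilde u_R(x)=\frac{1}{\sqrt{R}}Du_R(\sqrt{R}\,x)$. The point of the rescaling is that near $\p E_1$ all the data entering a boundary second-derivative estimate become uniform in $R$: the hypersurface $\p E_1$ is fixed, smooth and strictly convex; the boundary datum on it is a constant bounded by a constant depending only on $A$ (and the fixed $\alpha,\beta,\bar C$); the equation $\s_k(\la(D^2\tilde u))=1$ is unchanged and uniformly elliptic along the admissible cone; $\tilde u_R$ is trapped between the rescalings of the barriers $\lu$ and $\bar u_R$, hence bounded in $C^0$ near $\p E_1$ uniformly in $R$; and $|D\tilde u_R|(x)=\frac{1}{\sqrt{R}}|Du_R|(\sqrt{R}\,x)=O(1)$ on $\p E_1$ by \eqref{c13}, in fact $O(1)$ throughout $\frac{1}{\sqrt{R}}\Omega_R$ by \eqref{c14}. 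Hence the double-boundary $C^2$ estimate of Section 5 of \cite{CNS3} applies with an $R$-independent constant $C=C(A)$, giving $|D^2\tilde u_R|\le C(A)$ on $\p E_1$, i.e. $|D^2 u_R|\le C(A)$ on $\p E_R$. Combining with the bound on $\Gamma$ yields \eqref{c2.7} with $C=C(\Gamma,\varphi,|Du|_{C^0(\Gamma)},A)$ independent of $R$.

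The step requiring the most care is the uniformity in $R$ of the boundary $C^2$ estimate on $\p E_1$. One has to check that the shrinking hole $\frac{1}{\sqrt{R}}D$, which recedes to the origin as $R\to\infty$, plays no role — which is the case because the \cite{CNS3} estimate is local near $\p E_1$ and the hole stays a fixed positive distance from $\p E_1$ once $R\ge R_0\gg1$ — and that the sub/supersolutions needed to run the barrier construction of \cite{CNS3} at $\p E_1$ can be chosen with constants depending only on $A$ and the fixed geometry of $\p E_1$. For the latter, the rescaled $\bar u_R$ (itself a solution agreeing with the boundary datum on $\p E_1$) serves as the supersolution, and the rescaling of $\lu_R$ from the proof of Lemma \ref{lem-c1-b} serves as the subsolution.
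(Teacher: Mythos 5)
Your proposal is correct and follows essentially the same route as the paper: the inner boundary bound is assembled from \eqref{c2.1}, Lemma \ref{lem-c2-ib-1} and Lemma \ref{lem-c2-ib-2} (with $|Du_R|_{C^0(\Gamma)}$ controlled $R$-independently by \eqref{c12}), and the outer boundary bound comes from the rescaling $\tilde u_R(x)=\frac{1}{R}u_R(\sqrt{R}\,x)$, which preserves second derivatives, together with the boundary $C^2$ estimate of Section 5 of \cite{CNS3} on the fixed hypersurface $\p E_1$. Your added remarks on the uniformity of the rescaled data and the irrelevance of the receding hole are sound elaborations of what the paper leaves implicit.
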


\subsubsection{$C^2$ global estimates} Finally we prove
\begin{lemma}
\label{lem-c2-gb}
Let $u_R$ be the strictly $k$-convex solution of \eqref{dirichlet}, then $u_R$ satisfies
\be\label{c2.8}|D^2u_R|<C\,\,\mbox{in $\bar\Omega_R,$}\ee
for some constant $C=C(A, \varphi, \Gamma, |Du|_{C^0(\Gamma)})>0$ that is independent of $R.$
\end{lemma}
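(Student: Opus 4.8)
The goal is the global $C^2$ bound $|D^2 u_R| \le C$ on $\bar\Omega_R$ with $C$ independent of $R$, and the standard route (as the paper already signals by citing \cite{CNS3}) is a maximum-principle argument for an auxiliary function built from the largest eigenvalue of $D^2 u_R$. First I would fix notation: write $\s_k^{ij} = \partial \s_k(\la(D^2 u_R))/\partial u_{ij}$ for the linearized operator, which is positive definite and, by concavity of $\s_k^{1/k}$ on $\Gamma_k$, satisfies the usual structural inequalities. The quantity to estimate is $M_R := \max_{\bar\Omega_R} \max_{|\xi|=1} u_{\xi\xi}(x)$; by Lemma \ref{lem-c2-ob} we already control $D^2 u_R$ on $\p\Omega_R$, so it suffices to rule out a large interior maximum.

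The key steps, in order: (1) Let $W(x,\xi) = u_{\xi\xi} + \phi(|Du_R|^2)$ or, more in the CNS spirit, consider $W = \log \lambda_{\max}(D^2 u_R) + a|x|^2$ (with $a$ chosen at the end) and suppose its maximum over $\bar\Omega_R \times \mathbb S^{n-1}$ is attained at an interior point $x_*$ in a direction $\xi$ that we may take to be $e_1$ after rotating; at such a point one may also arrange $D^2 u_R(x_*)$ to be diagonal with entries $u_{11}\ge \cdots \ge u_{nn}$. (2) Differentiate the equation $\s_k(\la(D^2 u_R)) = 1$ once and twice; the once-differentiated relation $\s_k^{ij} u_{ij\ell} = 0$ kills the gradient term from $|x|^2$ after one differentiation, and the twice-differentiated relation gives $\s_k^{ij} u_{ij11} = -\s_k^{ij,pq} u_{ij1} u_{pq1} \ge 0$ using concavity. (3) Compute $0 \ge \s_k^{ij} W_{ij}$ at $x_*$: the concavity term contributes a good (nonnegative) sign, the third-order terms $\sum_{i} \s_k^{ii}(u_{1i1})^2/u_{11}^2$ coming from differentiating $\log u_{11}$ are absorbed against the $\s_k^{ij,pq}$ term (this is the standard CNS cancellation, valid because of the ordering of eigenvalues), and the term $a \s_k^{ii}$ from $a|x|^2$ gives a definite positive lower bound since $\sum \s_k^{ii} \ge c(n,k) > 0$ on $\Gamma_k$ with $\s_k = 1$. (4) Balancing these, one gets $u_{11}(x_*) \le C(a, n, k)$, hence $M_R \le C(A,\varphi,\Gamma,|Du|_{C^0(\Gamma)})$ by the boundary estimate and the fact that $|Du_R|$ is controlled independently of $R$ near $\Gamma$ by Lemma \ref{lem-c1-g} and Lemma \ref{lem-c1-b} (the growth $C_R = O(R^{1/2})$ on $\p E_R$ is harmless because $a|x|^2$ is being used to dominate it, or one simply notes the boundary $C^2$ bound already handles $\p E_R$).

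The main obstacle I expect is the delicate bookkeeping in step (3): showing that the ``bad'' positive third-order terms produced by differentiating $\log\lambda_{\max}$ twice are genuinely dominated by the ``good'' term $-\s_k^{ij,pq}u_{ij1}u_{pq1}$ coming from the concavity of $\s_k$. This is precisely the argument on pages 284--285 of \cite{CNS3}, and it requires care with the case where $\lambda_{\max}$ has multiplicity (where one perturbs, or uses that $W$ is still a viscosity subsolution in the appropriate sense). A secondary, more routine point is making sure all constants are genuinely $R$-independent: this follows because the only place $R$ enters is through $\bar u_R$, whose $C^2$ norm on $\p E_R$ is uniformly bounded after the rescaling $\tu_R = R^{-1}u_R(\sqrt R\,\cdot)$ used in Lemma \ref{lem-c2-ob}, and through $|Du_R|$ on $\p\Omega_R$, where the $\Gamma$-side bound $C_\Gamma$ is $R$-independent and the $\p E_R$-side enters only the already-established boundary estimate. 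Once $M_R$ is bounded, $|\la_n| = |u_{nn}|$ is bounded below away from $0$ by the equation $\s_k(\la) = 1$ together with $\la \in \Gamma_k$ and the upper bound on the other eigenvalues, so $|D^2 u_R| \le C$ on all of $\bar\Omega_R$.
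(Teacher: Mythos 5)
Your overall strategy---reduce the interior second derivatives to the boundary via the maximum principle and concavity, then invoke Lemma \ref{lem-c2-ob}---is the right one, but the specific test function you propose introduces two genuine problems. First, the auxiliary terms $a|x|^2$ (or $\phi(|Du_R|^2)$) are of size $O(R)$ on $\p E_R$, since $E_R=\{s<R\}$ forces $\sup_{\Omega_R}|x|^2\sim R$ and $|Du_R|=O(R^{1/2})$ there by Lemma \ref{lem-c1-b}; so even if the interior maximum of $W$ is ruled out, the resulting bound $\log\la_{\max}(x)\le \max_{\p\Omega_R}W - a|x|^2\le C+CaR$ is not $R$-independent. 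These correction terms are designed for situations where one must \emph{avoid} the boundary; here the boundary estimate is already uniform, so they only hurt. Second, the claimed absorption of the bad terms $\sum_i\s_k^{ii}(u_{11i})^2/u_{11}^2$ (produced by differentiating $\log\la_{\max}$) into $-F^{ij,pq}u_{ij1}u_{pq1}$ is exactly the Pogorelov-type interior $C^2$ estimate for $\s_k$, which is \emph{not} available for general $3\le k\le n-1$ without extra convexity; the cited pages 284--285 of \cite{CNS3} concern the double-normal boundary derivative (used in Lemma \ref{lem-c2-ib-2}), not this cancellation. So step (3) as written would fail.

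The paper's proof avoids all of this. Writing $F(D^2u)=\s_k^{1/k}(\la(D^2u))=1$ and differentiating twice gives $F^{ij}u_{ijkk}+F^{pq,rs}u_{pqk}u_{rsk}=0$, and concavity of $F$ yields $F^{ij}(\Delta u_R)_{ij}\ge 0$ with no third-order analysis at all, since $\Delta u_R$ is smooth (unlike $\la_{\max}$) and no logarithm is taken. The maximum principle then gives $\max_{\bar\Omega_R}\Delta u_R=\max_{\p\Omega_R}\Delta u_R$, which is bounded by Lemma \ref{lem-c2-ob}. Finally the lower bound on the eigenvalues does not come from the equation $\s_k=1$ but from the pointwise algebraic identity $(\Delta u)^2-\sum_{i,j}u_{ij}^2=2\s_2(\la(D^2u))>0$, valid because $u_R$ is strictly $k$-convex with $k\ge 2$; this bounds every entry of $D^2u_R$ by $\Delta u_R$. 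If you replace your test function by $\Delta u_R$ (or by $u_{\xi\xi}$ for each fixed direction $\xi$, which is likewise a subsolution of the linearized operator) and drop the auxiliary terms, your argument becomes correct and coincides with the paper's.
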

\begin{proof}
We denote $F(D^2u)=\s_k^{1/k}(\la(D^2u)),$ then \eqref{dirichlet} can be written as
\be\label{c2.9}
F(D^2u)=1\,\,\mbox{in $\Omega_R.$}
\ee
Now differentiating \eqref{c2.9} twice we get
\[F^{ij}u_{ijkk}+F^{pq, rs}u_{pqk}u_{rsk}=0.\]
It is well known that $F$ is a concave function. Therefore, we get
\[F^{ij}u_{ijkk}=F^{ij}(\Delta u)_{ij}\geq 0.\]
By the maximum principle we obtain
\be\label{c2.10}\max\limits_{x\in\bar\Omega_R}\Delta u_R=\max\limits_{x\in\partial\Omega_R}\Delta u.\ee
Since
\[(\Delta u)^2-\sum\limits_{i, j}|u_{ij}|^2=2\s_2(\la(D^2u))>0,\]
\eqref{c2.8} follows from \eqref{c2.10} and Lemma \ref{lem-c2-ob}.
\end{proof}

\subsection{Proof of the existence part of Theorem \ref{thm1}} In this subsection, we will complete the proof of the existence part of Theorem \ref{thm1}.
First, by virtue of equation \eqref{c12} and Lemma \ref{lem-c2-gb} it is easy to see that there exist some constants $C_0, C_1>0$ that are independent of $R$
such that $|Du_R|\leq C_0+C_1\text{dist}(x, \partial D).$ Moreover, applying Evans-Krylov theorem and Schauder estimates, we can obtain higher order estimates of $u_R$
that are independent of $R.$
By the standard maximum principle we know that when $R_1<R_2,$ for any compact set $K\subset\Omega_{R_1}\subset\Omega_{R_2},$ we have $u_{R_2}<u_{R_1}$ on $K.$  Therefore, $\{u_R\}$ is decreasing in $R.$ We conclude
\[u_R(x)\goto u(x)\,\,\mbox{in $C_{loc}^\infty$ topology.}\]
Moreover, $u(x)\in C^\infty(\mathbb R^n\setminus D)$ satisfies
\be\label{c2.11}
\left\{
\begin{aligned}
\s_k(\la(D^2u))&=1\,\,&\mbox{in $\mathbb R^n\setminus\bar D$}\\
u&=\varphi\,\,&\mbox{on $\partial D.$}
\end{aligned}
\right.
\ee
This completes the proof of the existence part of Theorem \ref{thm1}.

\bigskip
\section{Asymptotic behavior near infinity}
\label{secthm2}
In this section, we will prove the second part of Theorem \ref{thm1}, that is, inequality \eqref{1.2} and  Theorem \ref{thm2}.

Recall Lemma \ref{lem-c0} we have
\[\lu<u_R<\bar u_R\,\,\mbox{in $E_R\setminus\bar D.$}\]
By virtue of equations \eqref{sub9}, \eqref{subsolution}, and Proposition \ref{prosub2} we obtain that in $E_R\setminus E_{R_0}$
\[
\begin{aligned}
&s+\mu(\alpha, \beta)+C_1s^{1-\beta}<u_R\\
<&s+\mu(\alpha, \beta)+\bar CR^{1-\beta}\leq s+\mu(\alpha, \beta)+\bar Cs^{1-\beta}.
\end{aligned}
\]
Here $C_1=C_1(\alpha, \beta)<0$ and $R_0\gg1$ is a large constant.
Therefore,  when $|x|>R_0^{1/2}$ is very large we have
\be\label{r1-1}|x|^{2\beta-2}|u-s-\mu(\alpha, \beta)|\leq C\ee
for some $C=C(\alpha, \beta, A)>0.$

Without loss of generality, in the following, we may assume $\mu(\alpha, \beta)=0.$
Now for any $x\in\mathbb R^n\setminus\bar D$ and $|x|=L>2R_0^{1/2},$ let
$u^L(y):=\lt(\frac{4}{L}\rt)^2u\lt(x+\frac{L}{4}y\rt),\,\,|y|\leq2.$
Then  by Lemma \ref{lem-c2-gb} $u^L(y)$ satisfies
\[\s_k(\la(D^2u^L(y)))=1\,\,\mbox{and $|D^2u^L|\leq C$ in $B_2(0).$}\]
Applying Evans-Krylov theorem and Schauder estimates we immediately obtain for any $m\geq 3$
\be\label{r1-2}
|D^m u^L|\leq C\,\,\mbox{in $B_{1}(0)$}
\ee
for some $C=C(m, n, k, |u^L|_{C^2(B_2(0))})>0.$
Now, denote
\[E^L(y):=u^L(y)-8\lt(\frac{x}{L}+\frac{y}{4}\rt)^TA\lt(\frac{x}{L}+\frac{y}{4}\rt)\]
By \eqref{r1-1} we get, on $B_2(0)$
\[|E^L(y)|\leq\lt(\frac{4}{L}\rt)^2\frac{C}{\lt|x+\frac{L}{4}y\rt|^{2(\beta-1)}}\leq\frac{C}{L^{2\beta}}.\]
Moreover, since $D^2E^L(y)=D^2u^L(y)-A,$ we have $F(A+D^2E^L)=F(A)=1.$
Let $F(\xi):=\s_k^{1/k}(\la(\xi_{ij})),$ $F_{\xi_{ij}}:=\frac{\partial F}{\partial\xi_{ij}},$ and
$\hat a_{ij}(y):=\int_0^1F_{\xi_{ij}}(A+\tau D^2E^L(y))d\tau.$
Then we can see that $E^L$ satisfies
\[\hat a_{ij}(y)D_{ij}E^L(y)=0\,\,\mbox{on $B_{1}(0).$}\]
 Note that by Lemma \ref{lem-c2-gb} and the estimate \eqref{r1-2} we have on $B_{1}(0),$
 \[\frac{1}{\La}I<\hat a_{ij}(y)<\La I\,\,\mbox{ and $|D^m\hat a_{ij}(y)|<C$ for $m\geq 1,$}\]
where $\La=\La(n, k, A, |u^L|_{C^2(B_1(0))})>0$ and $C=C(m, n, k, A, |u^L|_{C^2(B_2(0))})>0.$
By Schauder estimates (see \cite{GT83}) we get for any $m\geq 1$
$$\label{r1-3}|D^mE^L(0)|\leq C|E^L(y)|_{C^0(B_1(0))}\leq\frac{C}{L^{2\beta}},$$
where $C=C(m, n, k, A,  |u^L|_{C^2(B_2(0))})>0.$
This yields for any $x\in\mathbb R^n\setminus\bar D$ and $|x|>2R_0^{1/2},$ we have \be\label{r1-3}|D^m E(x)|\leq\frac{C}{|x|^{2\beta-2+m}}\ee for $E(x)=u(x)-\frac{1}{2}x^TAx$ and $C=C(m, n, k, A, |u|_{C^2})>0.$

Now, we can follow the proof of Lemma 3.6 in \cite{CL03} (which is an estimate for linear elliptic equation) and obtain \eqref{1.2}. This completes the proof of Theorem \ref{thm1}.
Repeating the argument above by replacing $\beta$ with $\frac{n}{2}$ gives Theorem \ref{thm2}.

\section*{Acknowledgements}
We are grateful to Jiguang Bao and Cong Wang for their insightful suggestions, which enhanced the exposition of this paper.

\bigskip

\end{document}